\newcommand{\doublegroupoid}[4]{\xymatrix{#1  \ar@<2pt>[d] \ar@<-2pt>[d] & #2 \ar@<2pt>[l] \ar@<-2pt>[l] \ar@<2pt>[d] \ar@<-2pt>[d] \\ #3  & #4 \ar@<2pt>[l] \ar@<-2pt>[l]}}
\newcommand{\barW}{\overline{W}}
\newcommand{\bitimes}[2]{\,_{#1}\!\!\times_{#2}}
\newcommand{\squarelabel}[5]{\xymatrix@M=0pt{ \ar@{}[dr]|{#1}  & \ar_{#2}[l] \\ \ar^{#5}[u] &  \ar^{#4}[l] \ar_{#3}[u] }}
\newcommand{\squarenolabel}[1]{\xymatrix@M=0pt{ \ar@{}[dr]|{#1} \ar[d] & \ar[l] \ar[d] \\  &  \ar[l]}}
\newcommand{\arrows}{\rightrightarrows}
\newcommand{\threenerve}[3]{\xymatrix{#3 \ar[d] \ar@<4pt>[d] \ar@<-4pt>[d] \\ #2 \ar@<2pt>[d] \ar@<-2pt>[d] \\ #1}}
\newcommand{\fournerve}[4]{\xymatrix{#4 \ar@<2pt>[d] \ar@<-2pt>[d] \ar@<6pt>[d] \ar@<-6pt>[d]\\#3 \ar[d] \ar@<4pt>[d] \ar@<-4pt>[d] \\ #2 \ar@<2pt>[d] \ar@<-2pt>[d] \\ #1}}
\newcommand{\mapstoxy}{\; \raisebox{-1.75pc}{$\longmapsto$} \;}
\newcommand{\largesim}{\mbox{\large $\thicksim$}}
\newcommand{\defequal}{:=}
\newcommand{\tolabel}[1]{\xrightarrow{#1}}
\newcommand{\id}{\mathrm{id}}
\newcommand{\intersect}{\cap}
\DeclareMathOperator{\diag}{diag}
\DeclareMathOperator{\Tot}{Tot}
\definecolor{forest}{rgb}{0.1,0.3,0}
\newcommand{\comment}[1]{}
\newtheorem{thm}{Theorem}[section]
\newtheorem{prop}[thm]{Proposition}
\newtheorem{lemma}[thm]{Lemma}
\theoremstyle{definition}
\newtheorem{definition}[thm]{Definition}
\theoremstyle{remark}
\newtheorem{remark}[thm]{Remark}
\newtheorem{remarks}[thm]{Remarks}
\newtheorem{example}[thm]{Example}
\newtheorem*{ack}{Acknowledgements}
\numberwithin{equation}{section}
\newcommand{\suchthat}{:}
\begin{document}

\title{From double Lie groupoids to local Lie $2$-groupoids}
\author{Rajan Amit Mehta}
\address{
Department of Mathematics \\ Pennsylvania State University \\ State
College, PA 16802} \email{mehta@math.psu.edu}

\author{Xiang Tang}
\address{Department of Mathematics\\
Washington University in Saint Louis\\
One Brookings Drive\\
Saint Louis, Missouri, USA 63130}
\email{xtang@math.wustl.edu}

\begin{abstract}
We apply the bar construction to the nerve of a double Lie groupoid to obtain a local Lie $2$-groupoid. As an application, we recover Haefliger's fundamental groupoid from the fundamental double groupoid of a Lie groupoid. In the case of a symplectic double groupoid, we study the induced closed $2$-form on the associated local Lie $2$-groupoid, which leads us to propose a definition of a symplectic $2$-groupoid.
\end{abstract}

\subjclass[2010]{53D17, 58H05}
\keywords{double groupoid, Lie 2-groupoid, bar construction, bisimplicial manifold, symplectic groupoid}
\maketitle
\section{Introduction}
In homological algebra, given a bisimplicial object $A_{\bullet,\bullet}$ in
an abelian category, one naturally associates two
chain complexes. One is the diagonal complex
$\diag(A)\defequal \{A_{p,p}\}$, and the other is the total complex
$\Tot(A)\defequal \{\sum_{p+q=\bullet} A_{p,q}\}$. The
(generalized) Eilenberg-Zilber theorem \cite{dol-pup} (see, e.g.\ \cite[Theorem 8.5.1]{weibel}) states that $\diag(A)$ is quasi-isomorphic to $\Tot(A)$. This result
plays a fundamental role in the proof of the K\"unneth theorem.

The Eilenberg-Zilber theorem has recently been further generalized by
Cegarra and Remedios \cite{ce-me:bar} as follows. Given a bisimplicial set
$X_{\bullet,\bullet}$, one naturally associates two simplicial sets. One is
the diagonal $\diag(X):=\{X_{p,p}\}$, and the other is given by Artin and Mazur's \emph{bar construction} $\barW$. Cegarra and Remedios proved that these two simplicial sets are
weakly homotopy equivalent. More recently, Cegarra, Heredia, and Remedios \cite{ce-he-me:double} studied the diagonal construction on (discrete) double groupoids satisfying a natural ``filling condition''. In this paper, we study the behavior of the bar construction in the smooth category, particularly with respect to bisimplicial manifolds arising from double Lie groupoids.

A double Lie groupoid \cite{bro-mac, mac:dblie1} is essentially a groupoid object in the category of Lie groupoids. We can present a double
Lie groupoid as a square
\begin{equation}\label{eqn:dbliegpd}
\doublegroupoid{V}{D}{M}{H},
\end{equation}
where each edge is a Lie groupoid, and where the various groupoid structures satisfy certain compatibility conditions (The precise definition will be recalled in \S\ref{sec:dbliedef}.).  Just as an element of a Lie groupoid can be depicted as an arrow, an element of (the total space of) a double Lie groupoid can be depicted as a square
\begin{equation}\label{eqn:square}
 \squarelabel{\alpha}{t_V \alpha}{s_H \alpha}{s_V \alpha}{t_H \alpha}
\end{equation}
whose edges correspond to
 the images of $\alpha \in D$ under the source and target
maps. Just as a Lie groupoid $G$ induces a simplicial manifold $N_\bullet G$ (called the \emph{nerve} of $G$), where $N_p G$ consists of composable $p$-tuples of arrows in $G$, a double Lie groupoid \eqref{eqn:dbliegpd} induces a bisimplicial manifold $N_{\bullet,\bullet} D$, where $N_{p,q}D$ consists of $p \times q$ arrays of composable squares.

We apply the bar construction to the nerve $N_{\bullet,\bullet}D$ of a
double Lie groupoid and study the Kan properties of the resulting simplicial
manifold $\barW ND$. We prove that $\barW ND$ satisfies the definition
\cite{zhu:kan} of a (weak) \emph{local Lie
$2$-groupoid}. Furthermore, if $D$ satisfies a filling condition then $\barW ND$ is a genuine Lie $2$-groupoid.

We then describe a functor $\mathfrak{G}$ from the category of (local) Lie
$2$-groupoids to the category of topological groupoids, essentially given by Kan replacement (in the local case) \cite{zhu:kan} followed by truncation. We thus have the following diagram of functors:
\begin{equation*}
     \fbox{double Lie groupoids} \tolabel{\barW N} \fbox{(local) Lie $2$-groupoids} \tolabel{\mathfrak{G}} \fbox{topological groupoids}
\end{equation*}

As an application, we look at the following example of a \emph{fundamental
double Lie groupoid}. If $G\rightrightarrows M$ is a Lie groupoid where the source map is a fibration, then the fundamental groupoids of $G$ and
$M$ form a double Lie groupoid
\begin{equation*}
\doublegroupoid{G}{\Pi_1(G)}{M}{\Pi_1(M)}.
\end{equation*}
The fibration assumption implies that the filling condition is satisfied, so $\barW N\Pi_1(G)$ is a Lie $2$-groupoid. By truncating this Lie $2$-groupoid, we recover the Haefliger fundamental groupoid \cite{haefliger:homotopy, haefliger:orbi, mo-mr:fundamental} of $G$.

When the source map of $G$ is not a
fibration, $\Pi_1(G)$ does not form a double Lie groupoid (instead, it is in some sense a ``groupoid in the category of local Lie groupoids.''). However, we can still construct a bisimplicial manifold $N_{\bullet,\bullet}\Pi_1(G)$, to which we can still apply the bar construction and obtain a local Lie $2$-groupoid. When we apply the functor $\mathfrak{G}$ to this local Lie $2$-groupoid, we again recover the Haefliger fundamental groupoid of $G$. Thus we are able to connect two natural notions of ``fundamental groupoid of a Lie groupoid''.

As an application to Poisson geometry, we also consider the case of \emph{symplectic double groupoids} \cite{mac:symplectic}, i.e.\ double Lie groupoids \eqref{eqn:dbliegpd} where $D$ is equipped with a symplectic form making $D$ a symplectic groupoid over both $V$ and $H$. Symplectic double groupoids are the objects that integrate Poisson groupoids and Lie bialgebroids \cite{ste:lagpd, weinstein:coisotropic}. If $D$ is a symplectic double groupoid integrating a Lie bialgebroid $(A, A^*)$, then we may think of $\barW ND$ as the (local) Lie $2$-groupoid integrating the Courant algebroid $A \oplus A^*$. This method can be used to construct many interesting examples of Lie $2$-groupoids integrating Courant algebroids.

Of course, in the case of a symplectic double groupoid $D$, it is natural to ask what geometric structure on $\barW ND$ is induced from the symplectic form $\omega$ on $D$. We find that $\omega$ naturally induces a $2$-form $\Omega$ on $\barW_2 ND$, which is both closed and multiplicative; however, $\Omega$ is usually degenerate.

At first, we had hoped to extend $\Omega$ to a non-degenerate $2$-form.
However, it seems that such an extension does not exist
naturally, as the following example illustrates. Let $M$ be a manifold, and consider the symplectic double groupoid
\[
\doublegroupoid{M\times M}{T^* M\times T^*M}{M}{T^*M}.
\]
Up to level $2$, the associated Lie $2$-groupoid is of the form
\begin{equation}\label{eqn:standard2}
\threenerve{M}{M\times T^*M}{M\times T^*M \times (T^*M\oplus T^*M)}.
\end{equation}
This is the Lie $2$-groupoid that integrates the standard Courant algebroid $TM \oplus T^*M$.
There does not seem to be a canonical\footnote{However, Li-Bland has explained to us that he and \v{S}evera \cite{lbs} have a model for locally integrating $TM\oplus T^*M$ where there is a nondegenerate (but also noncanonical) closed $2$-form on the $2$nd level.} non-degenerate $2$-form on the $2$nd level $M\times T^*M \times (T^*M\oplus T^*M)$, so instead we seek conditions that control the degeneracy of $\Omega$.

Our approach is inspired by Xu's definition of quasi-symplectic groupoid \cite{xu:quasi-symp}. The idea is to localize to various degenerate submanifolds of $\barW_2 ND$ and then consider the compatibility properties between the face maps and the $2$-form $\Omega$ along those submanifolds. Based on these compatibility properties, we tentatively propose a definition of \emph{symplectic $2$-groupoid}.  We prove that the nondegeneracy of the multiplicative $2$-form $\omega$ on $D$ is equivalent to the nondegeneracy conditions for $\Omega$ in our definition of (local) symplectic $2$-groupoid.

We believe that the conditions that make up our definition of symplectic $2$-groupoid extract important information about a symplectic double groupoid (and its associated infinitesimal object) from its associated (local) Lie $2$-groupoid.  We plan to discuss this issue in a future work.

We should emphasize that we do not consider our definition of symplectic $2$-groupoid to be the final word, but rather a way of beginning a discussion on the problem. We have learned from Li-Bland and \v{S}evera \cite{lbs} that they have discovered various possible solutions that are different from ours. We hope that together our ideas will lead to a full understanding of symplectic $2$-groupoids.

This paper is organized as follows. In \S\ref{sec:bisimplicial}, we briefly review the definition of bisimplicial manifolds and the bar construction. In \S\ref{sec:dblie}, we recall the definition of double Lie groupoid, and we describe the constructions of the nerve and classifying space of a double Lie groupoid. In \S\ref{sec:n}, we prove that $\barW$ of the nerve of a double Lie groupoid is a local Lie $2$-groupoid. The functor $\mathfrak{G}$ and the example of the fundamental double groupoid are discussed in \S\ref{sec:groupoidization}. We turn to symplectic double groupoids in \S\ref{sec:symplectic} and conclude with our proposal for the definition of a symplectic $2$-groupoid.

\begin{ack}
We would like to thank A.\ Cattaneo, D.\ Li-Bland, P.\ Severa, and C.\
Zhu for interesting discussions and suggestions. Tang's research is
partially supported by NSF grant 0900985.
\end{ack}

\section{Bisimplicial manifolds}\label{sec:bisimplicial}

In this section we briefly recall the definitions of simplicial and bisimplicial manifolds (see, e.g.\ \cite{gel-man, goe-jar}), as well as the bar construction of Artin and Mazur \cite{art-maz}.

\subsection{Definitions}

\begin{definition} \label{dfn:simplicial}A \emph{simplicial manifold} is a sequence $X_\bullet = \{X_q\}$, $q \geq 0$, of manifolds equipped with surjective submersions $f_i^q: X_q \to X_{q-1}$ (called \emph{face maps}), $i=0,\dots,q$,  and embeddings $\delta_i^q : X_q \to X_{q+1}$ (called \emph{degeneracy maps}), $i=0, \dots, q$,  such that
\begin{align}
f_i^{q-1}f_j^q &= f_{j-1}^{q-1}f_i^q, &i < j,\label{eqn:twoface}\\
\delta_i^{q+1}\delta_j^q &= \delta_{j+1}^{q+1}\delta_i^q, &i < j, \label{eqn:twodegen}\\
f_i^{q+1}\delta_j^q &= \left\{ \begin{aligned}
&\delta_{j-1}^{q-1}f_i^q, & i &< j,\\
&id, & i &= j, \; i = j+1, \\
&\delta_j^{q-1}f_{i-1}^q, & i &> j+1. \label{eqn:facedeg}\end{aligned}
\right.
\end{align}
\end{definition}

\begin{definition} \label{dfn:bisimplicial} A \emph{bisimplicial manifold} is a two-parameter sequence $X_{\bullet,\bullet} = \{X_{p,q}\}$, $p, q \geq 0$ of manifolds equipped with
\begin{itemize}
 \item surjective submersions $v_i^{p,q}: X_{p,q} \to X_{p, q-1}$ (called \emph{vertical face maps}), $i=0,\dots,q$,
\item surjective submersions $h_i^{p,q}: X_{p,q} \to X_{p-1, q}$ (called \emph{horizontal face maps}), $i=0,\dots,p$,
\item embeddings $\mu_i^{p,q} : X_{p,q} \to X_{p,q+1}$ (called \emph{vertical degeneracy maps}), $i=0, \dots, q$, and
\item embeddings $\eta_i^{p,q} : X_{p,q} \to X_{p+1,q}$ (called \emph{horizontal degeneracy maps}), $i=0, \dots, p$,
\end{itemize}
such that
\begin{enumerate}
 \item $\left( X_{p,\bullet}, \{v_i^{p,q}\}, \{\mu_i^{p,q}\} \right)$ is a simplicial manifold for each $p$,
\item $\left( X_{\bullet,q}, \{h_i^{p,q}\}, \{\eta_i^{p,q}\} \right)$ is a simplicial manifold for each $q$, and
\item the horizontal and vertical structure maps commute:
    \begin{enumerate}
    \item $v_j^{p-1,q} h_i^{p,q} = h_i^{p,q-1} v_j^{p,q}$,
    \item $\mu_j^{p+1,q} \eta_i^{p,q} = \eta_i^{p,q+1} \mu_j^{p,q}$,
    \item $v_j^{p+1,q} \eta_i^{p,q} = \eta_i^{p,q-1} v_j^{p,q}$,
    \item $\mu_j^{p-1,q} h_i^{p,q} = h_i^{p,q+1} \mu_j^{p,q}$.
    \end{enumerate}

\end{enumerate}
\end{definition}

\subsection{The bar construction}\label{sec:bar}

The \emph{bar construction} is a functor $\barW$ from the category of bisimplicial manifolds to the category of simplicial manifolds. It is described as follows.

Let $X_{\bullet,\bullet}$ be a bisimplicial manifold. For each $r$, let $\barW_r X$ be the space consisting of $r$-tuples $(x_0, \dots, x_r)$, where $x_i \in X_{i, r-i}$, and where $v_0^{i,r-i}(x_i) = h_{i+1}^{i+1,r-i-1}(x_{i+1})$ for $0 \leq i < r$. In other words, $\barW_r X$ is the fiber product
\begin{equation}\label{eqn:bar}
    \barW_r X = X_{0,r} \bitimes{v_0}{h_1} X_{1,r-1} \bitimes{v_0}{h_2} \cdots \bitimes{v_0}{h_r} X_{r,0}.
\end{equation}
Since both the horizontal and vertical face maps are submersions, we have that $\barW_r X$ is a smooth manifold for each $r$.

The sequence $\barW_\bullet X = \{ \barW_r X \}$ is a simplicial manifold with face and degeneracy maps defined as follows:
\begin{align}
 f_i^r (x_0,\dots,x_r) &= \left( v_i (x_0), v_{i-1}(x_1), \dots, v_1(x_{i-1}), h_i(x_{i+1}), \dots, h_i(x_r) \right), \label{eqn:barface} \\
\delta_i^r (x_0,\dots,x_r) &= \left( \mu_i(x_0), \mu_{i-1}(x_1), \dots, \mu_0(x_i), \eta_i(x_i), \dots, \eta_i(x_r) \right). \label{eqn:bardegen}
\end{align}

\section{Double Lie groupoids}\label{sec:dblie}

In this section, we review the definition of double Lie groupoids \cite{bro-mac, mac:dblie1} and describe the nerve functor from the category of double Lie groupoids to that of bisimplicial manifolds. Finally, we discuss various constructions for the classifying space of a double Lie groupoid.

\subsection{Definition and examples}\label{sec:dbliedef}

Consider a square \eqref{eqn:dbliegpd} where each edge is a Lie groupoid. Let $s_H$ and $t_H$ denote the source and target maps from $D$ to $H$, respectively, and let $s_V$ and $t_V$ denote those from $D$ to $V$. We will use $s$ and $t$ to denote the source and target maps from either $H$ or $V$ to $M$; the domain will be clear from the context.

Let $\cdot_H : D \bitimes{s_H}{t_H} D \to D$ and $\cdot_V : D \bitimes{s_V}{t_V} D \to D$ denote the multiplication maps for the Lie groupoid structures on $D$ over $H$ and $V$, respectively.

\begin{definition}\label{dfn:dbliegpd}
The square \eqref{eqn:dbliegpd} is a \emph{double Lie groupoid} if the following conditions hold:
\begin{enumerate}
\item The horizontal and vertical source and target maps commute:
\begin{align*}
 s \circ s_H = s \circ s_V, && t \circ t_H = t \circ t_V,\\
 t \circ s_H = s \circ t_V, && s \circ t_H = t \circ s_V.
\end{align*}
\item The multiplication maps respect source and target:
\begin{align*}
     s_V (\alpha_1 \cdot_H \alpha_2) = (s_V \alpha_1) \cdot (s_V \alpha_2), && t_V (\alpha_1 \cdot_H \alpha_2) = (t_V \alpha_1) \cdot (t_V \alpha_2), \\
    s_H (\alpha_1 \cdot_V \alpha_3) = (s_H \alpha_1) \cdot (s_H \alpha_3), && t_H (\alpha_1 \cdot_V \alpha_3) = (s_H \alpha_1) \cdot (s_H \alpha_2),
\end{align*}
for all $\alpha_i \in D$ such that $s_H \alpha_1 = t_H \alpha_2$ and $s_V \alpha_1 = t_V \alpha_3$.
\item The \emph{interchange law}
\begin{equation*}
     \left(\alpha_{11} \cdot_H \alpha_{12}\right) \cdot_V \left(\alpha_{21} \cdot_H \alpha_{22}\right) = \left(\alpha_{11} \cdot_V \alpha_{21}\right) \cdot_H \left(\alpha_{12} \cdot_V \alpha_{22}\right)
\end{equation*}
holds for all $\alpha_{11}, \alpha_{12}, \alpha_{21}, \alpha_{22}
\in D$ such that $s_H(\alpha_{i1}) = t_H(\alpha_{i2})$ and
$s_V(\alpha_{1i}) = t_V(\alpha_{2i})$ for $i=1,2$.
\item The double-source map $(s_V, s_H): D \to V \bitimes{s}{s} H$ is a submersion.
\end{enumerate}
\end{definition}

We note that, in \cite{bro-mac} and \cite{mac:dblie1}, the double-source map is required to be a \emph{surjective} submersion. As noted by Stefanini \cite{ste:lagpd}, the surjectivity property does not play an essential role in the definition of double Lie groupoids, so we follow his lead in dropping the requirement. However, we will see that double Lie groupoids with surjective double-source map behave particularly well with respect to the main construction of this paper. Thus we give this property a name:

\begin{definition}
     A double Lie groupoid is \emph{full} if the double-source map $(s_V, s_H): D \to V \bitimes{s}{s} H$ is surjective.
\end{definition}

\begin{remarks}
 \begin{enumerate}
  \item Following Mackenzie\footnote{We note that our conventions for the square \eqref{eqn:square} are different from those in \cite{mac:dblie1}. In our conventions, the left and right sides of the square \eqref{eqn:square} represent arrows in $H$, and the top and bottom sides represent arrows in $V$. We can explain this convention by observing that horizontal (resp.\ vertical) composition of squares takes place over elements of $H$ (resp.\ $V$).} \cite{mac:dblie1}, we may depict any element $\alpha \in D$ as a square \eqref{eqn:square}. Condition (1) in Definition \ref{dfn:dbliegpd} expresses the fact that the vertices of the square are well-defined points in $M$. The multiplication operations $\cdot_H$ and $\cdot_V$ may be depicted, respectively, as horizontal and vertical composition of squares. Condition (2) implies that four-fold products of the form
\begin{equation*}
\xymatrix@M=0pt{& \ar[l]& \ar[l]\\ \ar[u]& \ar[l] \ar[u] \ar@{}[ul]|{\alpha_{11}}& \ar[l] \ar[u] \ar@{}[ul]|{\alpha_{12}}\\ \ar[u]& \ar[l] \ar[u] \ar@{}[ul]|{\alpha_{21}}& \ar[l] \ar[u] \ar@{}[ul]|{\alpha_{22}}}
\end{equation*}
can be computed in two different ways (horizontally then vertically, and vice versa), and the interchange law guarantees that the two four-fold products agree.

\item Conditions (1) and (2) in Definition \ref{dfn:dbliegpd} imply that $D \bitimes{s_V}{t_V} D$ and $D \bitimes{s_H}{t_H} D$ naturally inherit groupoid structures over $H \bitimes{s}{t} H$ and $V \bitimes{s}{t} V$, respectively. Condition (4) guarantees that the source maps $(s_H, s_H): D \bitimes{s_V}{t_V} D \to H \bitimes{s}{t} H$ and $(s_V,s_V): D \bitimes{s_H}{t_H} D \to V \bitimes{s}{t} V$ are submersions, so that $D \bitimes{s_V}{t_V} D$ and $D \bitimes{s_H}{t_H} D$ are in fact \emph{Lie} groupoids.

\item It follows from the conditions of Definition \ref{dfn:dbliegpd} that the structure maps (source, target, multiplication, identity, and inverse) for either of the groupoid structures on $D$ are Lie groupoid morphisms with respect to the other groupoid structure. In this sense, a double Lie groupoid is a ``Lie groupoid object in the category of Lie groupoids.''
 \end{enumerate}
\end{remarks}

\begin{example}[Pair double groupoid]
     Let $G \arrows M$ be a Lie groupoid. Then
\begin{equation*}
     \doublegroupoid{G}{G \times G}{M}{M \times M},
\end{equation*}
where the horizontal edges are pair groupoids, is a full double Lie groupoid.
\end{example}

\begin{example}[Fundamental double Lie groupoid]\label{example:fundamental}
Let $M$ be a manifold. The \emph{fundamental groupoid} $\Pi_1(M) \arrows M$ consists of equivalence classes of paths in $M$ modulo endpoint-preserving homotopies. The operation $M \mapsto \Pi_1(M)$ is a functor from the category of manifolds to the category of Lie groupoids; given a smooth map of manifolds $\phi: M \to N$, the induced map of fundamental groupoids is denoted $\phi_*: \Pi_1(M) \to \Pi_1(N)$.

Let $G \arrows M$ be a Lie groupoid. The source and target maps $s,t: G \to M$ induce maps $s_*, t_*: \Pi_1(G) \to \Pi_1(M)$. If $s$ (or, equivalently, $t$) is a fibration, then the fiber product $\Pi_1(G) \bitimes{s_*}{t_*} \Pi_1(G)$ can be canonically identified with $\Pi_1(G \bitimes{s}{t} G)$.  Thus the map $\Pi_1(G \bitimes{s}{t} G) \to \Pi_1(G)$ induced by multiplication in $G$ can be interpreted as a multiplication map for $\Pi_1(G)$. In this manner, $\Pi_1(G)$ has the structure of a Lie groupoid over $\Pi_1(M)$, and it is fairly easy to see that
\begin{equation}\label{eqn:fundamental}
     \doublegroupoid{G}{\Pi_1(G)}{M}{\Pi_1(M)}
\end{equation}
is a double Lie groupoid. Again using the assumption that $s$ is a fibration, we have that this double Lie groupoid is full.

The fibration property that we have used is nontrivial and excludes common examples such as \v{C}ech groupoids and certain groupoids representing orbifolds. Later, in \S\ref{sec:fundamental}, we describe how Lie groupoids not satisfying the fibration property may still be incorporated into our construction.
\end{example}

\begin{example}[Symplectic double groupoids]\label{example:symplectic}
A \emph{symplectic double groupoid} is a double Lie groupoid \eqref{eqn:dbliegpd}, where $D$ is equipped with a symplectic form making $D$ into a symplectic groupoid with respect to both groupoid structures. For example, if $G \arrows M$ is a Lie groupoid with Lie algebroid $A \to M$, then the cotangent bundle $T^*G$ can be viewed as a symplectic double groupoid as follows:
\begin{equation*}
     \doublegroupoid{G}{T^*G}{M}{A^*}.
\end{equation*}
Here, the groupoid structures on the horizontal sides are simply vector bundles.

Another example is the pair double groupoid
\begin{equation*}
     \doublegroupoid{G}{G \times \bar{G}}{M}{M \times \bar{M}},
\end{equation*}
where $G \arrows M$ is a symplectic groupoid. The bars over the rightmost copies of $G$ and $M$ indicate that they are equipped with the opposite symplectic/Poisson structures.

More generally, any Poisson groupoid can, under certain assumptions, be integrated to a symplectic double groupoid \cite{ste:lagpd, weinstein:coisotropic}. Conversely, any symplectic double groupoid can be differentiated in two different ways to obtain Poisson groupoids in duality \cite{lu-weinstein, mac:symplectic}.
\end{example}

\begin{example}
     Double Lie groupoids of the form
\begin{equation*}
     \doublegroupoid{G}{D}{M}{M}
\end{equation*}
are in one-to-one correspondence with strict Lie $2$-groupoids. Such double Lie groupoids are always full.
\end{example}

\subsection{The nerve functor}\label{sec:nerve}

Consider a double Lie groupoid as in \eqref{eqn:dbliegpd}. For $q>0$, let $V^{(q)}$ be the space of composable $q$-tuples of elements of $V$:
\begin{equation*}
     V^{(q)} = \{ (\theta_1, \dots, \theta_q) \in V^q \suchthat s(\theta_i) = t(\theta_{i+1}) \}.
\end{equation*}
We have that $V^{(q)}$ is smooth since $s$ is a submersion. Next,
define $D^{(q)}_H$ to be the space of horizontally composable
$q$-tuples of elements of $D$:
\begin{equation*}
     D^{(q)}_H = \{ (\alpha_1, \dots, \alpha_q) \in D^q \suchthat s_H(\alpha_i) = t_H(\alpha_{i+1}) \}.
\end{equation*}
Smoothness of $D^{(q)}_H$ follows from the fact that $s_H$ is a submersion.
\begin{lemma}
     For every $q>0$, the space $D^{(q)}_H$ is a Lie subgroupoid of the Cartesian product $D^q \arrows V^q$, with base $V^{(q)}$.
\end{lemma}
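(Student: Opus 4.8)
The plan is to verify three things in turn: (i) $D^{(q)}_H$ and $V^{(q)}$ are embedded submanifolds of $D^q$ and $V^q$; (ii) the groupoid structure of the Cartesian power $D^q \arrows V^q$ --- whose source and target are the $q$-fold products of $s_V$ and $t_V$ and whose multiplication is $\cdot_V$ applied coordinatewise --- restricts to a groupoid structure on $D^{(q)}_H$ with object set $V^{(q)}$; and (iii) the restricted source and target maps $D^{(q)}_H \to V^{(q)}$ are surjective submersions, so that $D^{(q)}_H \arrows V^{(q)}$ is itself a Lie groupoid. Statement (i) is the remark already made: $D^{(q)}_H$ is the iterated fiber product $D \bitimes{s_H}{t_H} \cdots \bitimes{s_H}{t_H} D$, cut out transversally because $s_H$ (equivalently $t_H$) is a submersion, and $V^{(q)}$ is cut out transversally using that $s\colon V\to M$ is a submersion.

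For (ii) one unwinds conditions (1) and (2) of Definition~\ref{dfn:dbliegpd}. If $s_H\alpha_i = t_H\alpha_{i+1}$, then condition (1) gives $s(s_V\alpha_i) = s(s_H\alpha_i) = s(t_H\alpha_{i+1}) = t(s_V\alpha_{i+1})$ and $s(t_V\alpha_i) = t(s_H\alpha_i) = t(t_H\alpha_{i+1}) = t(t_V\alpha_{i+1})$, so the source and target of $D^q\arrows V^q$ carry $D^{(q)}_H$ into $V^{(q)}$; and condition (2) gives $s_H(\alpha_i \cdot_V \beta_i) = (s_H\alpha_i)\cdot(s_H\beta_i) = (t_H\alpha_{i+1})\cdot(t_H\beta_{i+1}) = t_H(\alpha_{i+1}\cdot_V\beta_{i+1})$, so $D^{(q)}_H$ is closed under coordinatewise $\cdot_V$-multiplication. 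Conditions (1)--(2) also imply that $s_H$ and $t_H$ intertwine the unit section and the inversion of $D\arrows V$ with those of $H$ (for instance, $s_H$ of a unit of $D\arrows V$ is an idempotent, hence a unit, of $H$), giving closure of $D^{(q)}_H$ under the unit section and the inversion of $D^q\arrows V^q$. At this point $D^{(q)}_H$ is a wide subgroupoid of $D^q\arrows V^q$ with object set $V^{(q)}$, and its source map is already surjective, since each $(\theta_1,\dots,\theta_q)\in V^{(q)}$ is hit by the unit section evaluated there.

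The substantive point is (iii): that the restricted source map $\sigma\colon D^{(q)}_H \to V^{(q)}$ is a submersion (the target then follows by composing with the inversion, a diffeomorphism of $D^{(q)}_H$). The key input is condition (4), that the double source map $(s_V,s_H)\colon D \to V\bitimes{s}{s}H$ is a submersion. I would prove surjectivity of $d\sigma$ by lifting tangent vectors along the chain of fiber products: given a tangent vector $(u_1,\dots,u_q)$ to $V^{(q)}$ over $(\alpha_1,\dots,\alpha_q)$, first choose arbitrary lifts $b_i\in T_{\alpha_i}D$ with $ds_V(b_i)=u_i$ (possible since $s_V$ is a submersion), and then correct the $b_i$ by vectors $c_i\in\ker ds_V|_{\alpha_i}$ so that $ds_H(b_i+c_i)=dt_H(b_{i+1}+c_{i+1})$ for each $i$. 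These corrections can be chosen recursively, starting from $c_q=0$: condition (4) forces $ds_H$ to map $\ker ds_V|_{\alpha_i}$ \emph{onto} $\ker ds|_{s_H\alpha_i}$, while the discrepancies $dt_H(b_{i+1})-ds_H(b_i)$ --- and likewise every $dt_H(c_{i+1})$ --- lie in that kernel precisely because $(u_1,\dots,u_q)$ is tangent to $V^{(q)}$ (apply $ds$ and use condition (1)). The resulting $(b_i+c_i)$ is then a tangent vector to $D^{(q)}_H$ lifting $(u_1,\dots,u_q)$, so $\sigma$ is a submersion.

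I expect step (iii) to be the main obstacle --- in particular, the bookkeeping in the recursive correction, i.e.\ checking that the quantities being corrected always lie in the image of the relevant restricted differential. An alternative route is induction on $q$: $D^{(q)}_H \arrows V^{(q)}$ is the fiber product, in the category of Lie groupoids, of $D^{(q-1)}_H\arrows V^{(q-1)}$ with $D\arrows V$ over $H\arrows M$ along the morphism $t_H\colon (D\arrows V)\to(H\arrows M)$; this $t_H$ is a fibration of Lie groupoids because $(s_V,t_H)\colon D\to V\bitimes{t}{s}H$ is a submersion (which reduces to condition (4) using that the inversion of $D$ over $V$ is a diffeomorphism), and fiber products of Lie groupoids along fibrations are again Lie groupoids. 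Either way, the real content is the submersion property discussed above.
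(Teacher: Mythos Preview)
Your argument is correct and follows the same outline as the paper's proof: conditions (1) and (2) of Definition~\ref{dfn:dbliegpd} give the subgroupoid property, and condition (4) gives that the restricted source map is a submersion. The paper's proof is a two-sentence sketch pointing to these same conditions without spelling out either the closure verifications or the recursive tangent-lifting argument you carry out in (iii), so your version is essentially a fleshed-out form of the same approach.
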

\begin{proof}
     The fact that $D^{(q)}_H \arrows V^{(q)}$ is a subgroupoid of $D^q \arrows V^q$ follows from conditions (1) and (2) in Definition \ref{dfn:dbliegpd}. The fact that the source map $s_V^q : D^{(q)}_H \arrows V^{(q)}$ is a submersion follows from condition (4) in Definition \ref{dfn:dbliegpd}.
\end{proof}

Similarly, for $p>0$, we define $H^{(p)}$ and $D^{(p)}_V$ as follows:
\begin{align*}
         H^{(p)} &= \{ (\eta_1, \dots, \eta_p) \in H^p \suchthat s(\eta_i) = t(\eta_{i+1}) \}, \\
     D^{(p)}_V &= \{ (\alpha_1, \dots, \alpha_p) \in D^p \suchthat s_V(\alpha_i) = t_V(\alpha_{i+1}) \}.
\end{align*}
\begin{lemma}
     For every $p>0$, the space $D^{(p)}_V$ is a Lie subgroupoid of the Cartesian product $D^p \arrows H^p$, with base $H^{(p)}$.
\end{lemma}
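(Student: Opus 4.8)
The plan is to deduce the statement from the preceding lemma --- its horizontal mirror image --- by exploiting the symmetry of Definition \ref{dfn:dbliegpd} under reflecting the square \eqref{eqn:dbliegpd} across its diagonal, i.e.\ under interchanging $H$ with $V$ and the two groupoid structures $\cdot_H$ and $\cdot_V$ (along with the corresponding source/target pairs $(s_H,t_H)$ and $(s_V,t_V)$). First I would verify that this interchange carries a double Lie groupoid to a double Lie groupoid: condition (1) is manifestly stable under the swap, the equations of condition (2) are permuted among themselves (reading its final equation with its evident correction, $t_H(\alpha_1\cdot_V\alpha_3)=(t_H\alpha_1)\cdot(t_H\alpha_3)$), and conditions (3) and (4) are symmetric as written. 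For the resulting ``transposed'' double Lie groupoid --- the square obtained from \eqref{eqn:dbliegpd} by reflecting across its diagonal --- the space of horizontally composable $p$-tuples is exactly $D^{(p)}_V$, the Cartesian-product groupoid playing the role of ``$D^q\arrows V^q$'' is $D^p\arrows H^p$, and the base of composable $p$-tuples is $H^{(p)}$; applying the preceding lemma to this transposed double Lie groupoid then yields the assertion verbatim.

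If one prefers a self-contained argument, one repeats the proof of the preceding lemma with the roles of the horizontal and vertical structures interchanged. The subgroupoid claim --- that $D^{(p)}_V$, which is smooth because $s_V$ is a submersion, is closed under the source, target, unit, multiplication, and inversion maps of $D^p\arrows H^p$ and has unit space $H^{(p)}$ --- is a routine diagram chase from the commutation relations (1), the multiplicativity relations (2), and the remark after Definition \ref{dfn:dbliegpd} that the structure maps of either groupoid structure on $D$ are morphisms for the other. The one step with real content is that the source map $(s_H,\dots,s_H)\colon D^{(p)}_V\to H^{(p)}$ is a submersion, so that $D^{(p)}_V$ is a \emph{Lie} subgroupoid rather than a subgroupoid that merely happens to be a manifold; as in the preceding lemma, this is precisely where condition (4) is used. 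I would induct on $p$ via the decompositions $D^{(p)}_V=D\bitimes{s_V}{t_V}D^{(p-1)}_V$ and $H^{(p)}=H\bitimes{s}{t}H^{(p-1)}$: the base case is that $s_H\colon D\to H$ is a submersion, and in the inductive step condition (4) makes the double-source map $(s_V,s_H)\colon D\to V\bitimes{s}{s}H$ a submersion, from which one concludes --- after checking that the pivot maps of the two fibered products match up (again via condition (1)) --- that the induced map $D^{(p)}_V\to H^{(p)}$ on fibered products is submersive.

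I do not expect a serious obstacle: the statement is the mirror image of one already established, and the proof is correspondingly short. The only points needing care are (i) confirming that all four conditions of Definition \ref{dfn:dbliegpd} genuinely survive the transposition --- where the typo in the last equation of condition (2) might momentarily obscure the pattern --- and, in the self-contained version, (ii) the fibered-product bookkeeping that shows condition (4) really does deliver submersivity of the horizontal source at each stage of the induction. Neither difficulty affects the essential picture.
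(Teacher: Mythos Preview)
Your proposal is correct and matches the paper's approach exactly: the paper gives no separate proof for this lemma, relying on the word ``Similarly'' before the definitions of $H^{(p)}$ and $D^{(p)}_V$ to signal that the argument is the transpose of the preceding lemma's (conditions (1)--(2) for the subgroupoid claim, condition (4) for submersivity of the source). Your self-contained inductive argument is more detailed than anything the paper provides, but it is not needed here.
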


Now we are ready to describe the nerve functor from the category of double Lie groupoids to the category of bisimplicial manifolds. Consider a double Lie groupoid as in \eqref{eqn:dbliegpd}. For $p,q > 0$, let $N_{p,q} D$ consist of all $pq$-tuples $\{\alpha_{ij}\}$, where $\alpha_{ij} \in D$ for $1\leq i \leq p$ and $1 \leq j \leq q$, subject to the following compatibility conditions:
\begin{align}\label{eqn:ncompat}
     s_H(\alpha_{ij}) &= t_H(\alpha_{i(j+1)}), & s_V(\alpha_{ij}) &= t_V(\alpha_{(i+1)j}).
\end{align}
We take $N_{0,q}D = V^{(q)}$ and $N_{p,0}D = H^{(p)}$ for $p,q > 0$, and we set $N_{0,0}D = M$.

An element of $N_{p,q}D$ (for $p,q > 0$) may be depicted as a composable $p \times q$ rectangular array of elements of $D$. For example, a typical element of $N_{3,2}D$ is of the form
\begin{equation*}
\xymatrix@M=0pt{& \ar[l]& \ar[l]\\ \ar[u]& \ar[l] \ar[u] \ar@{}[ul]|{\alpha_{11}}& \ar[l] \ar[u] \ar@{}[ul]|{\alpha_{12}}\\ \ar[u]& \ar[l] \ar[u] \ar@{}[ul]|{\alpha_{21}}& \ar[l] \ar[u] \ar@{}[ul]|{\alpha_{22}}\\ \ar[u]& \ar[l] \ar[u] \ar@{}[ul]|{\alpha_{31}}& \ar[l] \ar[u] \ar@{}[ul]|{\alpha_{32}}}.
\end{equation*}
Such an array may be viewed as either a composable $p$-tuple of elements of $D^{(q)}_H$ or as a composable $q$-tuple of elements of $D^{(p)}_V$. Thus there are natural horizontal and vertical simplicial manifold structures on $N_{\bullet,\bullet} D$, obtained by identifying $N_{\bullet, q} D$ (respectively, $N_{p,\bullet}D$) with the nerve of the Lie groupoid $D^{(q)}_H \arrows V^{(q)}$ (respectively, $D^{(p)}_V \arrows H^{(p)}$). The interchange law then implies that the horizontal and vertical structure maps commute. Thus, we have:
\begin{prop}
     $N_{\bullet,\bullet} D$ is a bisimplicial manifold.
\end{prop}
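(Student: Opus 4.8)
The plan is to reduce everything to the already-established fact that the nerve of a Lie groupoid is a simplicial manifold, using the two auxiliary Lie groupoids $D^{(q)}_H \arrows V^{(q)}$ and $D^{(p)}_V \arrows H^{(p)}$ supplied by the two preceding lemmas. First I would recall the standard fact that for any Lie groupoid $\mathcal{G} \arrows \mathcal{G}_0$ the nerve $N_\bullet \mathcal{G}$ is a simplicial manifold: the face maps are the outer deletions $f_0, f_n$ and the inner multiplications $f_i$ ($0 < i < n$), all surjective submersions because the source and target maps of $\mathcal{G}$ are, while the degeneracies insert identity arrows and are embeddings because the unit section is. Applying this to $\mathcal{G} = D^{(q)}_H \arrows V^{(q)}$ (a genuine Lie groupoid by the first lemma) gives, for each fixed $q > 0$, a simplicial structure on $N_{\bullet, q} D$ whose horizontal face and degeneracy maps $h_i, \eta_i$ are exactly the array operations ``merge (via $\cdot_V$) or delete the $i$-th row'' and ``insert an identity row''; symmetrically the second lemma makes $N_{p, \bullet} D$ the nerve of $D^{(p)}_V \arrows H^{(p)}$ for each fixed $p > 0$. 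The boundary cases are immediate: $N_{0,\bullet}D = V^{(\bullet)}$ and $N_{\bullet, 0}D = H^{(\bullet)}$ are the nerves of $V \arrows M$ and $H \arrows M$, and $N_{0,0}D = M$. This settles conditions (1) and (2) of Definition \ref{dfn:bisimplicial}.

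It remains to verify the commutativity relations (3)(a)--(d), which is where the double-groupoid axioms beyond ``two Lie-groupoid structures'' are used. When at least one of the two operations being compared is an outer deletion, both sides just delete a row and a column and the identity is obvious. When $v_j$ is an inner horizontal multiplication and $h_i$ an inner vertical multiplication, so that both operations merge a pair of adjacent rows and a pair of adjacent columns, the two orders of composing produce exactly the two sides of the interchange law (Condition (3) of Definition \ref{dfn:dbliegpd}) applied to the relevant $2 \times 2$ sub-array, hence agree. At each step one also invokes Condition (2) of Definition \ref{dfn:dbliegpd} to know that merging a pair of rows via $\cdot_V$ yields a tuple still satisfying the horizontal compatibility conditions \eqref{eqn:ncompat} (and vice versa) — i.e.\ that the output is a legitimate element of the target $N_{p',q'}D$; this is precisely what justifies identifying $N_{\bullet, q}D$ and $N_{p,\bullet}D$ with nerves of the auxiliary groupoids in the first place. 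For the degeneracy relation (3)(b), inserting a $\cdot_V$-identity row and a $\cdot_H$-identity column commute because the unit section of either groupoid structure is a morphism with respect to the other, as recorded in the remarks following Definition \ref{dfn:dbliegpd}; the mixed relations (3)(c),(d) are the corresponding statements that the unit section of one structure intertwines the multiplication of the other, again a consequence of Condition (2).

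The main obstacle is bookkeeping rather than mathematics: one must fix conventions for indexing the row/column operations, check that the identifications $N_{\bullet, q}D \cong N_\bullet(D^{(q)}_H)$ and $N_{p, \bullet}D \cong N_\bullet(D^{(p)}_V)$ are compatible with face and degeneracy maps on the nose, and then run the routine multi-case check of (3)(a)--(d). Since the interchange law and Condition (2) are tailor-made for exactly these commutations, no genuinely new input is needed; I would dispatch the outer-deletion cases in a line, spell out the single inner/inner case where the interchange law enters, and leave the degeneracy cases as a remark.
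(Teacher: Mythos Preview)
Your approach is essentially identical to the paper's: the paper also identifies $N_{\bullet,q}D$ and $N_{p,\bullet}D$ with the nerves of the auxiliary Lie groupoids $D^{(q)}_H \arrows V^{(q)}$ and $D^{(p)}_V \arrows H^{(p)}$ to get the two simplicial directions, and then invokes the interchange law for the commutativity of the horizontal and vertical structure maps. In fact your case analysis of (3)(a)--(d) is considerably more detailed than what the paper provides, which dispatches the whole thing in a single sentence before stating the proposition without a formal proof.
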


\subsection{Classifying spaces}
Let $\Delta^\bullet$ be the cosimplicial space of simplices, with the standard coface maps $\hat{f}_i^q : \Delta^{q-1} \to \Delta^q$ and codegeneracy maps $\hat{\delta}_i^q: \Delta^{q+1} \to \Delta^q$, satisfying equations dual to \eqref{eqn:twoface}--\eqref{eqn:facedeg}. Recall that
the geometric realization $|X|$ of a simplicial space $X_\bullet$ is
defined to be
\[
\coprod_{n=0}^\infty(\Delta^n\times X_n)/\largesim,
\]
where the equivalence relation $\sim$ is generated by
$(\hat{f}_i^q (s_{q-1}),x_q)\sim (s_{q-1},f_i^q(x_q))$ and
$(\hat{\delta}^q_i(s_{q+1}), x_q)=(s_{q+1}, \delta^q_i(x_q))$, for $s_{q \pm 1} \in
\Delta^{q \pm 1} , x_q \in X_q$. Geometric realization is a
functor from the category of simplicial spaces to the category of
topological spaces.

For a bisimplicial space $X_{\bullet, \bullet}$, there are three
natural ways to define the geometric realization:
\begin{enumerate}
\item Diagonal: Consider the diagonal simplicial space $X^D_\bullet$, where $X^D_n = X_{nn}$ and the structure maps are obtained by composing corresponding horizontal and vertical structure maps. Take the geometric realization $|X^D|$.
\item Horizontal-Vertical: For a fixed $p$, consider the ``vertical'' simplicial space $X_{p,\bullet}$. Let $X^H_\bullet$ be the simplicial space where $X^H_p = |X_{p,\bullet}|$, and where the structure maps are inherited from the horizontal simplicial structure on $X_{\bullet, \bullet}$. Take the geometric realization $|X^H|$.
\item Vertical-Horizontal: Similarly, let $X^V_\bullet$ be the simplicial space where $X^V_q = |X_{\bullet, q}|$. Take the geometric realization $|X^V|$.
\end{enumerate}
A version of the Eilenberg-Zilber theorem \cite{gel-man} states that the
topological spaces $|X^D|$, $|X^H|$, and $|X^V|$ are all
homeomorphic, so we are justified in simply writing $|X|$.

Let $D$ be a double Lie groupoid. As was explained in
\S\ref{sec:nerve}, the nerve functor defines a bisimplicial space
$ND$. (It is in fact a bisimplicial manifold, but the smooth
structure is not relevant for the present discussion.) We define the
\emph{classifying space} $BD$ to be the geometric realization
$|ND|$.

In the case of the nerve of a double Lie groupoid $D$, the latter two constructions of the geometric realization correspond to applying the usual (groupoid) classifying space functor to $D$ in both directions; in other words, $BD$ may be computed as either $B(B_H D \arrows BV)$ or as $B(B_V D \arrows BH)$.

\begin{example}Let $\Gamma$ be a Lie group acting on a Lie groupoid $G\rightrightarrows
M$ by groupoid automorphisms. Consider the double Lie groupoid
$G\rtimes \Gamma$,
\begin{equation}\label{eqn:actiondouble}
    \doublegroupoid{G}{G\rtimes \Gamma}{M}{M\rtimes \Gamma},
\end{equation}
where the groupoid structure on $G \rtimes \Gamma \arrows M \rtimes \Gamma$ is the Cartesian product of $G \arrows M$ and the trivial Lie groupoid $\Gamma \arrows \Gamma$.

If we apply the usual classifying space functor in the vertical direction, then we get the topological groupoid $BG \rtimes \Gamma \arrows BG$ corresponding to the induced action of $\Gamma$ on $BG$. The classifying space of this action groupoid (hence the classifying space of \eqref{eqn:actiondouble}) is the homotopy quotient
\begin{equation}\label{eqn:classaction}
  (BG\times E\Gamma)/\Gamma.
\end{equation}
On the other hand, we may first apply the usual classifying space functor in the horizontal direction, obtaining the homotopy quotient groupoid
\begin{equation}\label{eqn:homquot}
(G \times E\Gamma)/\Gamma \arrows (M \times E\Gamma)/\Gamma.
\end{equation}
By the Eilenberg-Zilber theorem, the classifying space of \eqref{eqn:homquot} is homeomorphic to \eqref{eqn:classaction}. Thus we may say that the classifying space of the homotopy quotient equals the homotopy quotient of the classifying space.

Now suppose that $\Gamma$ is a \emph{discrete} group acting on a manifold $M$. The action of $\Gamma$ naturally lifts to an action of $\Gamma$ on the fundamental groupoid $\Pi_1(M)\rightrightarrows M$. In this case, the ``action double groupoid'' \eqref{eqn:actiondouble} can be identified with the fundamental double groupoid
\begin{equation*}
     \doublegroupoid{\Pi_1(M)}{\Pi_1(M \rtimes \Gamma)}{M}{M \rtimes \Gamma}.
\end{equation*}
According to the above discussion, the classifying space\footnote{One should
compare this computation with the definition of fundamental group of
a differentiable stack in \cite{noohi} and \cite{laurent-xu-tu}.} $B(\Pi_1(M \rtimes \Gamma))$ is
$(B\Pi_1(M)\times E\Gamma)/\Gamma$. When $M$ is connected, it follows from the associated homotopy long exact sequence that $B(\Pi_1(M \rtimes \Gamma))$ is the Eilenberg-Maclane space $K(G,1)$, where $G$ is a (nonabelian) extension of $\Gamma$ by the fundamental group $\pi_1(M)$.
\end{example}

\section{Lie $n$-groupoids and local Lie $n$-groupoids}\label{sec:n}

\subsection{Definitions}

We now review the ``simplicial'' definitions of (weak) Lie $n$-groupoids and local Lie $n$-groupoids. The simplicial approach to $n$-groupoids goes back to Duskin \cite{duskin} in the discrete case, and the smooth analogue appeared in \cite{henriques:integrating}. The definition of local Lie $n$-groupoid is due to Zhu \cite{zhu:kan}.

Let $X = \{X_q\}$ be a simplicial manifold. For $q \geq 1$ and $0 \leq k \leq q$, a $(q,k)$-\emph{horn} of $X$ consists of a $q$-tuple $(x_0, \dots, x_{k-1}, x_{k+1}, \dots x_q)$, where $x_i \in X_{q-1}$, satisfying the \emph{horn compatibility equations}
\begin{equation}\label{eqn:horncompat}
     f_i^{q-1} x_j = f_{j-1}^{q-1} x_i
\end{equation}
for $i < j$. The space $\Lambda_{q,k} X$ of $(q,k)$-horns is smooth since the face maps $f_i^q$ are submersions.

There are natural \emph{horn maps} $\lambda_{q,k}: X_q \to \Lambda_{q,k} X$, where
\begin{equation*}
 \lambda_{q,k}(x) = (f_0^q x, \dots, \widehat{f_k^q x}, \dots f_q^q x)
\end{equation*}
 for $x \in X_q$. It is immediate from \eqref{eqn:twoface} that $\lambda_{q,k}(x)$ satisfies the horn compatibility equations \eqref{eqn:horncompat}; in fact, the purpose of the horn compatibility equations is to axiomatize the properties satisfied by $\lambda_{q,k}(x)$.

\begin{definition}\label{dfn:n}
     A \emph{Lie $n$-groupoid} is a simplicial manifold such that the horn maps $\lambda_{q,k}$ are
\begin{enumerate}
     \item surjective submersions for all $q \geq 1$, and
     \item diffeomorphisms for all $q > n$.
\end{enumerate}
\end{definition}

\begin{definition}\label{dfn:localn}
     A \emph{local Lie $n$-groupoid} is a simplicial manifold such that the horn maps $\lambda_{q,k}$ are
\begin{enumerate}
     \item submersions for all $q \geq 1$, and
     \item injective \'{e}tale for all $q > n$.
\end{enumerate}
\end{definition}

We remark that in Definitions \ref{dfn:n} and \ref{dfn:localn}, the case $q=1$ is redundant. Specifically, the requirement that $\lambda_{1,0} = f_1^1$ and $\lambda_{1,1} = f_0^1$ be surjective submersions is already part of the definition of simplicial manifold.

\subsection{Applying the bar functor to the nerve}

Let $D$ be a double Lie groupoid as in \eqref{eqn:dbliegpd}. In
\S\ref{sec:nerve} we described the nerve construction, which
produces a bisimplicial manifold $ND$. Now we shall apply the bar
construction, described in \S\ref{sec:bar}, to obtain a simplicial
manifold $\barW ND$.

We begin by describing $\barW_r ND$ for the first few values of $r$. From \eqref{eqn:bar}, we have the following:
\begin{align*}
     \barW_0 ND &= N_{0,0} D = M, \\
\barW_1 ND &= N_{0,1}D\bitimes{v_0}{h_1} N_{1,0}D = V \bitimes{s}{t} H,\\
\barW_2 ND &= N_{0,2}D \bitimes{v_0}{h_1} N_{1,1}D \bitimes{v_0}{h_2} N_{2,0}D = V^{(2)} \bitimes{p_2}{t_V} D \bitimes{s_H}{p_1} H^{(2)},
\end{align*}
where $p_1$ and $p_2$ are, respectively, the ``projection onto the first and second component'' maps. Eliminating redundancies arising from the fiber products, we may simplify the description of $\barW_2 ND$  to $V \bitimes{s}{t^2} D \bitimes{s^2}{t} H$, where $s^2 \defequal s \circ s_V = s \circ s_H$ and $t^2 \defequal t \circ t_V = t \circ t_H$. So, up to $r=2$, the simplicial manifold $\barW ND$ is of the form
\begin{equation}
         \threenerve{M}{V \bitimes{s}{t} H}{V \bitimes{s}{t^2} D \bitimes{s^2}{t} H}.
\end{equation}
From \eqref{eqn:barface}, we have the following equations describing the face maps:
\begin{align*}
     f_0^1 (\theta, \eta) &= s(\eta), \\
f_1^1(\theta,\eta) &= t(\theta),
\end{align*}
for $(\theta, \eta) \in V \bitimes{s}{t} H$, and
\begin{align}
     f_0^2 (\theta, \alpha, \eta) &= (s_V \alpha, \eta),\label{eqn:face20} \\
     f_1^2 (\theta, \alpha, \eta) &= (\theta \cdot t_V \alpha, s_H \alpha \cdot \eta), \\
     f_2^2 (\theta, \alpha, \eta) &= (\theta, t_H \alpha),\label{eqn:face22}
\end{align}
for $(\theta, \alpha, \eta) \in V \bitimes{s}{t^2} D \bitimes{s^2}{t} H$.

A $1$-simplex $(\theta, \eta)$ and a $2$-simplex $(\theta, \alpha, \eta)$ can be drawn as follows:
\begin{align*}
\xymatrix@M=0pt{ & \ar_{\theta}[l] \\ & \ar_{\eta}[u]}  &&
\xymatrix@M=0pt{ & \ar_{\theta}[l] & \ar[l] \\ & \ar[u] & \ar[u]
\ar[l] \ar@{}[ul]|{\alpha} \\ & & \ar_{\eta}[u]}.
\end{align*}
We emphasize that, in these diagrams, elements of $V$ are drawn as horizontal lines, and elements of $H$ are drawn as vertical lines. The face maps $f_0^2$, $f_1^2$, and $f_2^2$ are respectively illustrated as follows:
\begin{align*}
     \xymatrix@M=0pt{ & \ar@{.}[l] & \ar@{.}[l] \\ & \ar@{.}[u] & \ar@{.}[u] \ar_{s_V \alpha}[l]  \\ & & \ar_{\eta}[u]} &&
     \xymatrix@M=0pt{ &  & \ar_{\theta \cdot t_V \alpha}[ll] \\ & \ar@{.}[u] &  \ar@{.}[l]  \\ & & \ar_{s_H \alpha \cdot \eta}[uu]} &&
     \xymatrix@M=0pt{ & \ar_{\theta}[l] & \ar@{.}[l] \\ & \ar_{t_H \alpha}[u] & \ar@{.}[u] \ar@{.}[l]  \\ & &
     \ar@{.}[u]}\qquad .
\end{align*}
In general, a $r$-simplex in $\barW ND$ consists of an element $\theta \in V$, a triangular array $\{ \alpha_{ij} \in D \}$ for $1 \leq i \leq j \leq r$, and an element $\eta \in H$, satisfying the compatibility equations \eqref{eqn:ncompat} and
\begin{align*}
     s(\theta) &= t^2(\alpha_{11}), & t(\eta) &= s^2(\alpha_{rr}).
\end{align*}
The pictures of a $3$-simplex and a $4$-simplex are as follows:
\begin{align}\label{eqn:34simplex}
     \xymatrix@M=0pt{ & \ar_{\theta}[l] & \ar[l] & \ar[l] \\ & \ar[u] & \ar[u] \ar[l] \ar@{}[ul]|{\alpha_{11}} & \ar[u] \ar[l] \ar@{}[ul]|{\alpha_{12}} \\ & & \ar[u] & \ar[u] \ar[l] \ar@{}[ul]|{\alpha_{22}} \\ & & & \ar_{\eta}[u]} &&
     \xymatrix@M=0pt{ & \ar_{\theta}[l] & \ar[l] & \ar[l] & \ar[l] \\ & \ar[u] & \ar[u] \ar[l] \ar@{}[ul]|{\alpha_{11}} & \ar[u] \ar[l] \ar@{}[ul]|{\alpha_{12}} & \ar[u] \ar[l] \ar@{}[ul]|{\alpha_{13}} \\ & & \ar[u] & \ar[u] \ar[l] \ar@{}[ul]|{\alpha_{22}} & \ar[u] \ar[l] \ar@{}[ul]|{\alpha_{23}}\\ & & & \ar[u] & \ar[u] \ar[l] \ar@{}[ul]|{\alpha_{33}} \\ & & & &
     \ar_{\eta}[u]}.
\end{align}
There is a nice description of the face maps in terms of pictures such as \eqref{eqn:34simplex}, where the face map $f_i^r : \barW_r ND \to \barW_{r-1} ND$ is given by deleting all the arrows in the $i$th row or column (Note that the columns and rows are counted starting from $0$ in the upper left corner). If an arrow forming the shared boundary of two squares is deleted, then those squares are multiplied across that boundary. If an arrow forming part of an outer boundary of a square is deleted, then that square (as well as any arrows touching the same outer boundary) is also removed. For example, the face maps $f_0^4$, $f_1^4$, and $f_2^4$ are respectively given by
\begin{align*}
          \xymatrix@M=0pt{ & \ar@{.}[l] & \ar@{.}[l] & \ar@{.}[l] & \ar@{.}[l] \\ & \ar@{.}[u] & \ar@{.}[u] \ar_{s_V \alpha_{11}}[l]  & \ar@{.}[u] \ar[l] & \ar@{.}[u] \ar[l] \\ & & \ar[u] & \ar[u] \ar[l] \ar@{}[ul]|{\alpha_{22}} & \ar[u] \ar[l] \ar@{}[ul]|{\alpha_{23}}\\ & & & \ar[u] & \ar[u] \ar[l] \ar@{}[ul]|{\alpha_{33}} \\ & & & & \ar_{\eta}[u]}
&&
     \xymatrix@M=0pt{ &  & \ar_{\theta \cdot t_V \alpha_{11}}[ll] & \ar[l] & \ar[l] \\ & \ar@{.}[u] &  \ar@{.}[l] &  \ar@{}[l]|{\cdot_V} \ar@{}[ul]|{\alpha_{12}} &  \ar@{}[l]|{\cdot_V} \ar@{}[ul]|{\alpha_{13}} \\ & & \ar[uu] & \ar[uu] \ar[l] \ar@{}[ul]|{\alpha_{22}} & \ar[uu] \ar[l] \ar@{}[ul]|{\alpha_{23}}\\ & & & \ar[u] & \ar[u] \ar[l] \ar@{}[ul]|{\alpha_{33}} \\ & & & & \ar_{\eta}[u]}
&&
     \xymatrix@M=0pt{ & \ar_{\theta}[l] & & \ar[ll] & \ar[l] \\ & \ar[u] & \ar@{}[u]|{\cdot_H} \ar@{}[ul]|{\alpha_{11}} & \ar[u] \ar[ll] \ar@{}[ul]|{\alpha_{12}} & \ar[u] \ar[l] \ar@{}[ul]|{\alpha_{13}} \\ & & \ar@{.}[u] &  \ar@{.}[l] & \ar@{}[l]|{\cdot_V} \ar@{}[ul]|{\alpha_{23}}\\ & & & \ar[uu] & \ar[uu] \ar[l] \ar@{}[ul]|{\alpha_{33}} \\ & & & &
     \ar_{\eta}[u]}.
\end{align*}

\begin{lemma}\label{lemma:wnd2}
     For the simplicial manifold $\barW ND$, the $2$-horn maps $\lambda_{2,k}$ are submersions. Furthermore, if $D$ is a full double Lie groupoid, then the $2$-horn maps are surjective.
\end{lemma}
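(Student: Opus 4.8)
The plan is to work directly with the explicit description of $\barW_2 ND = V \bitimes{s}{t^2} D \bitimes{s^2}{t} H$ and the face maps \eqref{eqn:face20}--\eqref{eqn:face22}, and to check the horn map conditions case by case for $k=0,1,2$. First I would write down $\Lambda_{2,k}ND$ concretely: a $(2,k)$-horn is a pair of $1$-simplices in $\barW_1 ND = V \bitimes{s}{t} H$ satisfying one horn compatibility equation of the form \eqref{eqn:horncompat}. Since a $1$-simplex is a pair $(\theta,\eta)$ with $s(\theta) = t(\eta)$, and $f_0^1(\theta,\eta) = s(\eta) \in M$, $f_1^1(\theta,\eta) = t(\theta) \in M$, I would spell out for each $k$ exactly which data a horn consists of and what the single compatibility equation says; this identifies $\Lambda_{2,k}ND$ with an explicit fiber product of copies of $V \bitimes{s}{t} H$ over $M$.

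Next, for each $k$, I would examine the horn map $\lambda_{2,k}(\theta,\alpha,\eta) = (f_i^2, f_j^2)$ (the two faces other than the $k$th) and show it is a submersion. The cases $k=0$ and $k=2$ should be the easy ones: e.g.\ for $k=0$ the horn map records $f_1^2(\theta,\alpha,\eta) = (\theta\cdot t_V\alpha, s_H\alpha\cdot\eta)$ and $f_2^2(\theta,\alpha,\eta) = (\theta, t_H\alpha)$, and one can recover $\theta$ directly from $f_2^2$, then solve for the rest using invertibility of the groupoid multiplications and submersivity of the relevant structure maps. The symmetric argument handles $k=2$. The case $k=1$ is the one I expect to be the main obstacle: here the horn map records $f_0^2 = (s_V\alpha,\eta)$ and $f_2^2 = (\theta, t_H\alpha)$, so from a horn one directly reads off $\theta$, $\eta$, $s_V\alpha$, and $t_H\alpha$, and the content is that the remaining freedom in choosing $\alpha$ with prescribed $s_V\alpha$ and $t_H\alpha$ is governed by a submersion. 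This is where condition (4) of Definition \ref{dfn:dbliegpd} — that the double-source map $(s_V, s_H): D \to V \bitimes{s}{s} H$ is a submersion — should enter; I would need to combine it with the fact that $t_H\alpha$ determines (and is determined up to the fibers of $s_H$ by) $s_H\alpha$ together with the horizontal target, translating the condition on $(s_V, t_H)$ into one on $(s_V, s_H)$ via the horizontal source/target maps on $H$. A dimension count plus a direct construction of local sections should make each horn map a submersion.

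Finally, for the surjectivity statement under the fullness hypothesis, I would again go case by case. For $k=0$ and $k=2$, surjectivity of $\lambda_{2,k}$ should follow already from the existence of identity (degenerate) squares and invertibility, without fullness — essentially because given the horn data one can take $\alpha$ to be a suitable product of a horizontal or vertical identity with a square solving a simpler problem. For $k=1$, given a horn consisting of $\theta \in V$, $\eta \in H$, and base points, I must produce $\alpha \in D$ with $s_V\alpha$ and $t_H\alpha$ the prescribed edges; the prescribed $s_V\alpha \in V$ and the horizontal source of $t_H\alpha$ determine the required value of the double-source map $(s_V\alpha, s_H\alpha) \in V\bitimes{s}{s}H$, and surjectivity of the double-source map — i.e.\ fullness — produces such an $\alpha$, after which one adjusts by identities to match $t_H\alpha$ exactly. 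I would double-check the edge-matching conditions ($s(\theta) = t^2(\alpha)$, $t(\eta) = s^2(\alpha)$) are consistent with the horn compatibility equations, so that the constructed $\alpha$ genuinely lies in $\barW_2 ND$. The main subtlety throughout is bookkeeping: keeping straight which of the four source/target maps $s_H, t_H, s_V, t_V$ appears where, and correctly translating between conditions on $(s_V, s_H)$ and conditions involving $t_H$.
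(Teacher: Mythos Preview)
Your overall strategy---factor each horn map through the double-source map---matches the paper's, but you have misidentified where condition (4) enters: it is needed for \emph{all three} values of $k$, not only $k=1$. The paper in fact carries out $k=2$ explicitly: $\lambda_{2,2}$ factors as two diffeomorphisms sandwiching $(\id,(s_H,s_V),\id)$, so condition (4) is exactly what makes $\lambda_{2,2}$ a submersion, and fullness is exactly what makes it surjective. The case $k=0$ is symmetric, using the double-target map $(t_V,t_H)$, which is a submersion (resp.\ surjective) if and only if the double-source map is, via the composite inverse $\iota_V\circ\iota_H$. For $k=1$ one needs $(s_V,t_H)$, related to $(s_V,s_H)$ via $\iota_H$ as you correctly anticipate. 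So the three cases are entirely parallel; none is intrinsically easier.

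Your specific claim that for $k=0$ and $k=2$ surjectivity follows ``without fullness'' from identity squares and invertibility is incorrect. Unwinding a $(2,2)$-horn $((\theta_0,\eta_0),(\theta_1,\eta_1))$, one finds that filling it requires producing $\alpha\in D$ with $s_V\alpha=\theta_0$ and $s_H\alpha=\eta_1\cdot\eta_0^{-1}$, i.e.\ hitting an \emph{arbitrary} point of $V\bitimes{s}{s}H$ under the double-source map; degenerate squares only provide fillers when one of these prescribed edges is an identity. The analogous statement holds for $k=0$ with the double-target map. Hence both the submersion property and surjectivity require condition (4) and fullness uniformly across $k=0,1,2$.
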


\begin{proof}
The general idea for the proof is to express each horn map as a composition of maps, where one factor is the fiber product of identity maps with the double-source map, and the other factors are diffeomorphisms. We will do this for $k=2$ and leave the other two cases for the reader.

The horn map $\lambda_{2,2}$ factors as the composition of the following three maps:
\begin{align*}
\barW_2 ND = V \bitimes{s}{t^2} D \bitimes{s^2}{t} H &\to V \bitimes{s}{s\circ t_V} D \bitimes{s^2}{t} H \\
(\theta, \alpha, \eta) &\mapsto (\theta \cdot t_V \alpha, \alpha, \eta), \\
V \bitimes{s}{s\circ t_V} D \bitimes{s^2}{t} H &\to V \bitimes{s}{t} H \bitimes{s}{s} V \bitimes {s}{t} H \\
(\theta, \alpha, \eta) &\mapsto (\theta, s_H \alpha, s_V \alpha, \eta), \\
V \bitimes{s}{t} H \bitimes{s}{s} V \bitimes {s}{t} H &\to \Lambda_{2,2} \barW ND \\
(\theta_1, \eta_1, \theta_2, \eta_2) &\mapsto \left( (\theta_2, \eta_2), (\theta_1, \eta_1 \cdot \eta_2) \right).
\end{align*}
The composition of these maps is described by the following diagram:
\begin{align*}
\xymatrix@M=0pt{ & \ar_{\theta}[l] & \ar[l] \\ & \ar[u] & \ar[u]
\ar[l] \ar@{}[ul]|{\alpha} \\ & & \ar_{\eta}[u]} &\mapstoxy&
\xymatrix@M=0pt{ &  & \ar[l] \ar@<-2pt>_{\theta \cdot t_V
\alpha}[ll] \\ & \ar[u] & \ar[u] \ar[l] \ar@{}[ul]|{\alpha} \\ & &
\ar_{\eta}[u]} &\mapstoxy& \xymatrix@M=0pt{ &  & \ar_{\theta \cdot
t_V \alpha}[ll] \\ &  & \ar_{s_H \alpha}[u] \ar_{s_V \alpha}[l] \\ &
& \ar_{\eta}[u]} &\mapstoxy& \xymatrix@M=0pt{ &  & \ar_{\theta \cdot
t_V \alpha}[ll] \\ &  & \ar_{s_V \alpha}[l] \\ & & \ar^{\eta}[u]
\ar@<-2pt>_{s_H \alpha \cdot \eta}[uu]}.
\end{align*}
The first and third maps are diffeomorphisms, and the second map is $\left(\id, (s_H, s_V), \id \right)$. Since the double-source map $(s_H, s_V)$ is a submersion, it follows that $\lambda_{2,2}$ is a submersion. If the double Lie groupoid is full (so that $(s_H, s_V)$ is also surjective), then $\lambda_{2,2}$ is surjective.
\end{proof}

\begin{lemma}\label{lemma:wndgeq3}
     For the simplicial manifold $\barW ND$, the $r$-horn maps $\lambda_{r,k}$ are diffeomorphisms for $r > 2$.
\end{lemma}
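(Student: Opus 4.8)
The plan is to run the same kind of argument as in the proof of Lemma~\ref{lemma:wnd2} --- writing each horn map $\lambda_{r,k}$ as a composite of simple maps --- but now to arrange that for $r>2$ \emph{every} factor is a diffeomorphism, so that, unlike when $r=2$, no copy of the non-injective double-source map is forced into the composition. The only building blocks needed are the ``multiply and remember a factor'' diffeomorphisms coming from the groupoid structures on $V$, on $H$, and on $D$ in each of its two directions: for a Lie groupoid $G$ the map $G\bitimes{s}{t}G\to G\bitimes{s}{t}G$, $(g_1,g_2)\mapsto(g_1g_2,\,g_1)$, has smooth inverse $(a,g_1)\mapsto(g_1,\,g_1^{-1}a)$, so that from a product together with one of its factors one recovers the other diffeomorphically. (This is exactly the statement that the nerve of a Lie groupoid is a Lie $1$-groupoid, specialized to dimension $2$.) The aim is to use such maps to pass smoothly between the iterated-fiber-product description~\eqref{eqn:bar} of $\barW_r ND$ and the iterated-fiber-product description of the horn space $\Lambda_{r,k}\barW ND$ cut out by the horn compatibility equations~\eqref{eqn:horncompat}.

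Concretely, I would first read off from the pictorial description of the face maps given above how the omitted face $f_k^r$ acts on a typical $r$-simplex~\eqref{eqn:34simplex}: deleting the $k$th row and column either multiplies a square of the triangular array $\{\alpha_{ij}\}$ into a neighbor by $\cdot_H$ or $\cdot_V$ (when it straddles the deleted row/column), or --- when $k=0$ or $k=r$ --- deletes a whole outer row or column, retaining only $s_V$ or $t_H$ of its squares, or leaves a square untouched. The decisive combinatorial observation is that, as soon as $r>2$, the remaining faces of a $(r,k)$-horn already determine the entire array: at least one square appears \emph{intact} in some remaining face --- precisely what can fail when $r=2$, where the single square has every face as a ``bad'' face and has no neighbor to be multiplied with --- and, starting from such an anchor, one recovers the other squares one at a time, each as the missing factor of a product recorded by some remaining face, using the division diffeomorphisms above; the corner data $\theta$ and $\eta$ are recovered in the same way. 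This produces an explicit smooth two-sided inverse of $\lambda_{r,k}$ built only from projections and these divisions. Since the combinatorics is uniform in $r$, one such argument handles all $r>2$ at once; alternatively, if one has available a lemma to the effect that the conditions of Definition~\ref{dfn:n}, once checked at level $n+1$, hold at all higher levels, it would suffice to treat $r=3$.

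The step I expect to be the main obstacle is the bookkeeping for the outer cases $k=0$ and $k=r$. There essentially a single square survives intact in the horn (the corner $\alpha_{11}$ when $k=0$, the opposite corner when $k=r$), so the recovery recursion is as long as it can be, and one must verify carefully that the products recorded by the remaining faces really do allow every $\alpha_{ij}$ to be peeled off by a single division, and --- crucially --- that the reconstructed array is independent of the order in which this is done; this last point is where the horn compatibility equations~\eqref{eqn:horncompat} and the interchange law, Definition~\ref{dfn:dbliegpd}(3), are used. A secondary nuisance, already visible in the computation of $\barW_2 ND$, is keeping the redundancy eliminations in the various iterated fiber products consistent along the whole chain of factorizing diffeomorphisms, so that the source and target matchings line up at each stage.
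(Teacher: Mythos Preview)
Your strategy is sound and would yield a correct proof: the ``division'' diffeomorphisms from the groupoid structures do allow one to reconstruct the full triangular array from any $(r,k)$-horn once $r>2$, and your anchor-and-peel recursion is the right picture. However, the paper takes a different and tidier route. Rather than reconstructing from the whole horn case by case in $k$, it introduces a \emph{partial horn space} $\Upsilon_r$ recording only the three faces $x_0,x_1,x_2$ (with their mutual compatibilities), and shows directly that the map $\upsilon_r\colon \barW_r ND\to\Upsilon_r$, $x\mapsto(f_0^r x,f_1^r x,f_2^r x)$, is already a diffeomorphism --- three consecutive faces determine the simplex. For $k>2$ the horn map then factors as $\upsilon_r$ followed by the (diffeomorphic) forgetful map $\Lambda_{r,k}\barW ND\to\Upsilon_r$; a symmetric argument with the last three faces handles $k<r-2$, leaving only $\lambda_{3,1},\lambda_{3,2},\lambda_{4,2}$ to check by hand. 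This buys uniformity: one explicit inverse (to $\upsilon_r$) replaces your family of $k$-dependent recursions, and the cases you flag as hardest, $k=0$ and $k=r$, are in fact absorbed by the three-face argument with no extra work. Conversely, your approach has the virtue of being self-contained for each $k$ and of making the role of the interchange law visible at every step.

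One caveat on your alternative: there is no general lemma saying that the horn conditions of Definition~\ref{dfn:n} propagate from level $n+1$ to all higher levels --- a simplicial manifold can have bijective $3$-horn maps and non-bijective $4$-horn maps --- so reducing to $r=3$ would itself require an argument specific to $\barW ND$ (essentially coskeletality), which is not shorter than what you already propose.
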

\begin{proof}
For $r > 2$, let $\Upsilon_r$ be the ``partial horn space'' consisting of triplets $(x_0, x_1, x_2)$ of $(r-1)$-simplices $x_i \in \barW_{r-1} ND$, satisfying the following compatibility equations:
\begin{align}\label{eqn:partialhorncompat}
     f_1^{r-1} x_0 &= f_0^{r-1}x_2, & f_0^{r-1} x_0 &= f_0^{r-1}x_1, &  f_1^{r-1} x_1 &= f_1^{r-1}x_2.
\end{align}
These equations are precisely the horn compatibility conditions that involve faces $0$, $1$, and $2$.

There is a natural map $\upsilon_r : \barW_r ND \to \Upsilon_r$, given by
\begin{equation*}
     \upsilon_r (x) = (f_0^r x, f_1^r x, f_2^r x).
\end{equation*}
When $k>2$, this map factors through the horn space $\Lambda_{r,k} \barW ND$, so that we have the following commutative diagram:
\begin{equation}\label{eqn:upsilondiagram}
     \xymatrix{ \barW_r ND \ar^-{\lambda_{r,k}}[r] \ar^{\upsilon_r}[dr] & \Lambda_{r,k} \barW ND \ar[d] \\ & \Upsilon_r}.
\end{equation}
We will construct an inverse of the map $\upsilon_r$. Consider an arbitrary $(x_0, x_1, x_2) \in \Upsilon_r$. Let us depict $x_0 \in \barW_{r-1} ND$ as
\begin{equation}\label{eqn:x0}
     \xymatrix@M=0pt{ & \ar_{\xi}[l] & \ar[l] &  & \ar[l] \\ & \ar[u] & \ar[u] \ar[l] \ar@{}[ul]|{\alpha_{22}} & \ar[u] \ar@{}[ul]|{\cdots} & \ar[u] \ar[l] \ar@{}[ul]|{\alpha_{2r}} \\ & & &  \ar@{}[ul]|{\ddots} & \ar[l] \ar@{}[ul]|{\vdots}\\ & & & \ar[u] & \ar[u] \ar[l] \ar@{}[ul]|{\alpha_{rr}} \\ & & & &
     \ar_{\eta}[u]}.
\end{equation}
The first of the compatibility conditions \eqref{eqn:partialhorncompat} says that the first two rows of $x_2$ are of the form
\begin{equation}\label{eqn:x2}
       \xymatrix@M=0pt@R+1pc@C+1pc{ & \ar_{\theta}[l] & \ar[l] & \ar[l] &  & \ar[l] \\
& \ar[u] & \ar[u] \ar[l] \ar@{}[ul]|{\gamma} & \ar[u] \ar[l] \ar@{}[ul]|{\alpha_{13}} & \ar[u] \ar@{}[ul]|{\cdots} & \ar[u] \ar[l] \ar@{}[ul]|{\alpha_{1r}} \\
& & \ar[u] & \ar[u] \ar[l] \ar@{}[ul]|{\alpha_{23} \cdot_V \alpha_{33}} & \ar[u] \ar@{}[ul]|{\cdots} & \ar[u] \ar[l] \ar@{}[ul]|{\alpha_{2r} \cdot_V \alpha_{3r}}}
\end{equation}
where $s_V(\gamma) = \xi \cdot t_V \alpha_{22}$, and that the remaining rows are the same as those of $x_0$. The latter two of the compatibility conditions \eqref{eqn:partialhorncompat} say that the first row of $x_1$ is of the form
\begin{equation*}
     \xymatrix@M=0pt@R+1pc@C+1pc{ & \ar_{\xi'}[l] & \ar[l] & \ar[l] &  & \ar[l] \\
& \ar[u] & \ar[u] \ar[l] \ar@{}[ul]|{\beta} & \ar[u] \ar[l] \ar@{}[ul]|{\alpha_{13} \cdot_V \alpha_{23}} & \ar[u] \ar@{}[ul]|{\cdots} & \ar[u] \ar[l] \ar@{}[ul]|{\alpha_{1r} \cdot_V \alpha_{2r}}}
\end{equation*}
where $s_V (\beta) = s_V (\alpha_{22})$ and $\xi' \cdot t_V \beta = \theta \cdot t_V \gamma$, and that the remaining rows are the same as those of $x_0$.

We then construct an $r$-simplex $x$ as
\begin{equation*}
     \xymatrix@M=0pt{ & \ar_{\theta}[l] & \ar[l] &  & \ar[l] \\ & \ar[u] & \ar[u] \ar[l] \ar@{}[ul]|{\alpha_{11}} & \ar[u] \ar@{}[ul]|{\cdots} & \ar[u] \ar[l] \ar@{}[ul]|{\alpha_{1r}} \\ & & &  \ar@{}[ul]|{\ddots} & \ar[l] \ar@{}[ul]|{\vdots}\\ & & & \ar[u] & \ar[u] \ar[l] \ar@{}[ul]|{\alpha_{rr}} \\ & & & & \ar_{\eta}[u]}
\end{equation*}
where $\theta \in V$, $\eta \in H$, and $\alpha_{ij} \in D$ (with the exception of $\alpha_{11}$ and $\alpha_{12}$) are the same as those in \eqref{eqn:x0}--\eqref{eqn:x2}, and
\begin{align*} \alpha_{12} &= \beta \cdot_V \iota_V(\alpha_{22}), & \alpha_{11} = \gamma \cdot_H \iota_H(\alpha_{12}).\end{align*}
Here, $\iota_V$ and $\iota_H$ denote, respectively, the vertical and horizontal inverse maps on $D$.

We can directly see from the definition of the face maps that the map $(x_0, x_1, x_2) \mapsto x$ given by the above construction is inverse to $\upsilon_r$, and therefore $\upsilon_r$ is a diffeomorphism. One can similarly show that for $k > 2$, there is a unique $(r-1)$-simplex $x_k$ that is compatible with $x_0$, $x_1$, and $x_2$, and therefore the map from $\Lambda_{r,k} \barW ND$ to $\Upsilon_r$ given by forgetting all but the $0$th, $1$st, and $2$nd faces is a diffeomorphism. Thus we conclude (c.f.\ diagram \eqref{eqn:upsilondiagram}) that the horn map $\lambda_{r,k}$ is a diffeomorphism for $k > 2$.

Using the same argument as above with a partial horn space associated to the last three faces, one can prove that $\lambda_{r,k}$ is a diffeomorphism for $k < r - 2$. The cases not covered under $k>2$ or $k<r-2$ are $\lambda_{3,1}$, $\lambda_{3,2}$ and $\lambda_{4,2}$, which the reader may check directly.
\end{proof}

An immediate consequence of Lemmas \ref{lemma:wnd2} and \ref{lemma:wndgeq3} is the following:
\begin{thm}
 Let $D$ be a double Lie groupoid. Then $\barW ND$ is a local Lie $2$-groupoid. Furthermore, if $D$ is a full double Lie groupoid, then $\barW ND$ is a Lie $2$-groupoid.
\end{thm}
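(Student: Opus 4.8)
The plan is to read the theorem off directly from Definitions~\ref{dfn:localn} and~\ref{dfn:n} specialized to $n=2$, feeding in the two lemmas already established. First I would recall that $\barW ND$ is a simplicial manifold (\S\ref{sec:bar}); in particular its degree-one face maps are surjective submersions, and since $\lambda_{1,0}=f_1^1$ and $\lambda_{1,1}=f_0^1$, this disposes of the (redundant) case $q=1$ in both definitions.

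Next I would invoke Lemma~\ref{lemma:wnd2} for $q=2$: the horn maps $\lambda_{2,k}$ are submersions, and they are moreover surjective when $D$ is full. Then I would invoke Lemma~\ref{lemma:wndgeq3} for $q=r>2$: the horn maps $\lambda_{r,k}$ are diffeomorphisms, hence in particular submersions (indeed surjective submersions) and injective \'{e}tale. Assembling the three ranges $q=1$, $q=2$, $q>2$ gives exactly condition~(1) of Definition~\ref{dfn:localn} for all $q\ge1$ together with condition~(2) for all $q>2$, so $\barW ND$ is a local Lie $2$-groupoid. If $D$ is full, the $q=2$ horn maps are upgraded to surjective submersions while the $q>2$ horn maps remain diffeomorphisms, which is precisely Definition~\ref{dfn:n} for $n=2$, so $\barW ND$ is a Lie $2$-groupoid.

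I do not expect any serious obstacle in this theorem itself: it is a bookkeeping corollary of the two lemmas, and the only point to watch is that the case analysis on $q$ is genuinely complete (the simplicial-manifold axioms cover $q=1$, Lemma~\ref{lemma:wnd2} covers $q=2$, and Lemma~\ref{lemma:wndgeq3} covers $q\ge3$). The real content—and the step I would expect to be the hard part—lives inside Lemma~\ref{lemma:wndgeq3}: constructing the explicit inverse to the map $\upsilon_r$ onto the partial horn space, and hand-checking the low-dimensional horn maps $\lambda_{3,1}$, $\lambda_{3,2}$, $\lambda_{4,2}$ that are not covered by the $k>2$ or $k<r-2$ arguments.
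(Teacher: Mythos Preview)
Your proposal is correct and matches the paper's approach exactly: the paper itself states the theorem as ``an immediate consequence of Lemmas~\ref{lemma:wnd2} and~\ref{lemma:wndgeq3}'' with no further argument, and your case-by-case reading of Definitions~\ref{dfn:localn} and~\ref{dfn:n} is precisely the intended bookkeeping. Your observation that the substantive work resides in Lemma~\ref{lemma:wndgeq3} is also accurate.
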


\begin{remark}
     We observe that the result of Lemma \ref{lemma:wndgeq3} is stronger than condition (2) of Definition \ref{dfn:localn}. This perhaps suggests that the definition of local Lie $n$-groupoid should be strengthened.
\end{remark}

\section{From local $2$-groupoids to groupoids}\label{sec:groupoidization}

\subsection{The functor $\mathfrak{G}$}

Let $X_\bullet$ be a simplicial space. We may form a topological groupoid $\mathfrak{G}(X) \arrows X_0$ by taking $\mathfrak{G}(X)$ to be the groupoid generated by $X_1$ with relations generated by the $2$-simplices $X_2$. More precisely, $\mathfrak{G}(X)$ consists of formal products $y_1^{\pm 1} \cdots y_m^{\pm 1}$ of composable elements $y_i \in X_1$ modulo relations generated by those of the form
\begin{equation}\label{eqn:grelations}
   f_1^2 z \sim f_2^2 z \cdot f_0^2 z
\end{equation}
for $z \in X_2$, in addition to the ``tautological relations'' $y \cdot y^{-1} \sim \delta_0^0 f_1^1 y$ and $y^{-1} \cdot y \sim \delta_0^0 f_0^1 y$. We note that the degeneracy map $\delta_0^0: X_0 \to X_1$ automatically satisfies the axioms of a unit map as a result of \eqref{eqn:facedeg}.

\begin{remark}\label{rmk:truncation}
     If $X_\bullet$ is a $2$-groupoid, then the surjectivity of the horn maps $\lambda_{2,k}$ implies that any formal product $y_1^{\pm 1} \cdots y_m^{\pm 1}$ can be (nonuniquely) reduced to a single element $y \in X_1$. We thereby recover the well-known ``truncation'' process, where $\mathfrak{G}(X) = X_1/\sim$, with the relation generated by $f_k^2 z \sim f_k^2 z'$ for any $z, z' \in X_2$ such that $\lambda_{2,k} z = \lambda_{2,k} z'$. In the general case, the functor $X \mapsto \mathfrak{G}(X)$ coincides with the composition of truncation with Kan extension \cite{zhu:kan}.
\end{remark}

We now apply the functor $\mathfrak{G}$ to $\barW ND$, where $D$ is a double Lie groupoid as in \eqref{eqn:dbliegpd}.  We will abbreviate $\mathfrak{G}(\barW ND)$ to simply $\mathfrak{G}(D)$. In this case, the main simplification is given by the following lemma.

\begin{lemma}\label{lemma:gdinv}
     Let $(\theta, \eta) \in V \bitimes{s}{t} H = \barW_1 ND$. In $\mathfrak{G}(D)$ the following relation holds:
\begin{equation}\label{eqn:gdinv}
     (\theta,\eta)^{-1} \sim (1,\eta^{-1}) \cdot (\theta^{-1},1).
\end{equation}
\end{lemma}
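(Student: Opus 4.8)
The plan is to exhibit three $2$-simplices of $\barW ND$, read off from each the relation it forces in $\mathfrak{G}(D)$ via \eqref{eqn:grelations}, and then assemble those relations using only the groupoid axioms of $\mathfrak{G}(D)$. Throughout I will use that the identity arrow of $\mathfrak{G}(D)$ over a point $m \in M$ is the degenerate $1$-simplex $\delta_0^0(m) = (1_m, 1_m) \in \barW_1 ND$, and I will write $\epsilon(m) \in D$ for the double identity square over $m$ (all four edges identities), $\epsilon_V(\theta) \in D$ for the unit of the groupoid $D \rightrightarrows V$ over $\theta \in V$, and $\epsilon_H(\eta) \in D$ for the unit of $D \rightrightarrows H$ over $\eta \in H$. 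These three kinds of identity squares always exist, so no fullness hypothesis will be needed.

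First I would take $z_1 = (\theta, \epsilon(s\theta), \eta) \in \barW_2 ND$. Using the face formulas \eqref{eqn:face20}--\eqref{eqn:face22} one computes $f_2^2 z_1 = (\theta,1)$, $f_0^2 z_1 = (1,\eta)$, and $f_1^2 z_1 = (\theta,\eta)$, so \eqref{eqn:grelations} yields the relation $(\theta,\eta) \sim (\theta,1)\cdot(1,\eta)$. Next I would take $z_2 = (\theta, \epsilon_V(\theta^{-1}), 1) \in \barW_2 ND$; here $f_2^2 z_2 = (\theta,1)$, $f_0^2 z_2 = (\theta^{-1},1)$, and $f_1^2 z_2 = (\theta\theta^{-1},1) = \delta_0^0(t\theta)$, which is the unit, so $(\theta,1)\cdot(\theta^{-1},1)\sim 1$ and hence $(\theta,1)^{-1} = (\theta^{-1},1)$ in the groupoid $\mathfrak{G}(D)$. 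Symmetrically, $z_3 = (1, \epsilon_H(\eta), \eta^{-1}) \in \barW_2 ND$ has $f_2^2 z_3 = (1,\eta)$, $f_0^2 z_3 = (1,\eta^{-1})$, and $f_1^2 z_3 = (1,\eta\eta^{-1}) = \delta_0^0(t\eta)$, giving $(1,\eta)^{-1} = (1,\eta^{-1})$. Combining the three, $(\theta,\eta)^{-1} \sim \big((\theta,1)\cdot(1,\eta)\big)^{-1} = (1,\eta)^{-1}\cdot(\theta,1)^{-1} \sim (1,\eta^{-1})\cdot(\theta^{-1},1)$, which is exactly \eqref{eqn:gdinv}.

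The conceptual content is entirely in the choice of the three $2$-simplices; everything else is routine. The step I expect to demand the most care is checking that $z_1$, $z_2$, and $z_3$ really are points of the iterated fiber product $\barW_2 ND = V\bitimes{s}{t^2} D \bitimes{s^2}{t} H$ and that the face maps evaluate as stated. Both verifications reduce to the compatibility of the horizontal and vertical source and target maps in Definition \ref{dfn:dbliegpd}(1) together with the defining properties of the unit sections $\epsilon$, $\epsilon_V$, $\epsilon_H$; the only genuine bookkeeping hazard is tracking which identity arrow (of $M$, of $V$, or of $H$) sits in each slot --- for instance, the ``$1$'' in $(\theta^{-1},1)$ is the identity $1_{t\theta} \in H$, whereas the ``$1$'' in $(1,\eta^{-1})$ is the identity $1_{s\eta} \in V$.
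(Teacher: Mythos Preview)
Your proof is correct and follows essentially the same strategy as the paper's: both arguments manufacture relations in $\mathfrak{G}(D)$ by plugging unit squares into $\barW_2 ND$ and reading off \eqref{eqn:grelations}. The only difference is organizational: the paper gets by with two $2$-simplices instead of three, since it takes $z = (\theta, \epsilon_H(\eta), \eta^{-1})$ with the full $\theta$ in the first slot (rather than $1$ as in your $z_3$), which immediately gives $(\theta,1) \sim (\theta,\eta)\cdot(1,\eta^{-1})$ and so bypasses the need for your splitting step $z_1$; your $z_2$ is exactly the paper's second simplex $z'$.
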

\begin{proof}
     Note that in \eqref{eqn:gdinv} we use ``$1$'' as a generic symbol for unit elements in $V$ and $H$.

We use relations from the $2$-simplices $z \defequal (\theta, 1(\eta), \eta^{-1})$ and $z' \defequal (\eta, 1(\eta^{-1}), 1)$, pictured below:
\begin{align*}
   z = \;\raisebox{1pc}{$\xymatrix@M=0pt{ & \ar_{\theta}[l] & \ar_{1}[l] \\ & \ar^{\eta}[u] & \ar_{\eta}[u] \ar^{1}[l] \ar@{}[ul]|{1(\eta)} \\ & & \ar_{\eta^{-1}}[u]}$} &&
z' = \;\raisebox{1pc}{$\xymatrix@M=0pt{ & \ar_{\theta}[l] & \ar_{\theta^{-1}}[l] \\ & \ar^{1}[u] & \ar_{1}[u] \ar^{\theta^{-1}}[l] \ar@{}[ul]|{1(\theta^{-1})} \\ & & \ar_{1}[u]}$}.
\end{align*}
For these $2$-simplices, the relations \eqref{eqn:grelations} are respectively
\begin{align}
    (\theta \cdot 1, \eta \cdot \eta^{-1}) = (\theta, 1) &\sim (\theta, \eta) \cdot (1, \eta^{-1}), \label{eqn:hvinv1} \\
(\theta \cdot \theta^{-1}, 1 \cdot 1) = (1, 1) &\sim (\theta, 1) \cdot (\theta^{-1}, 1). \label{eqn:hvinv2}
\end{align}
Combining \eqref{eqn:hvinv1} and \eqref{eqn:hvinv2}, we have
\begin{equation*}
     (1,1) \sim (\theta, \eta) \cdot (1, \eta^{-1}) \cdot (\theta^{-1},1),
\end{equation*}
which implies the statement of the lemma.
\end{proof}

As a result of Lemma \ref{lemma:gdinv}, it is not necessary to include formal inverses as generators for $\mathfrak{G}(D)$. Thus we have the following description:
\begin{definition}\label{dfn:groupoidification}
     The groupoid $\mathfrak{G}(D) \arrows M$ associated to a double Lie groupoid \eqref{eqn:dbliegpd} consists of formal products $(\theta_1, \eta_1) \cdots (\theta_m, \eta_m)$ of composable elements $\theta_i \in V$, $\eta_i \in H$, modulo relations generated by those of the form
\begin{equation}\label{eqn:gdrelations}
   (\theta \cdot t_V \alpha, s_H \alpha \cdot \eta) \sim (\theta, t_H \alpha) \cdot (s_V \alpha, \eta)
\end{equation}

for $(\theta, \alpha, \eta) \in V \bitimes{s}{t^2} D \bitimes{s^2}{t} H$.
\end{definition}

\begin{remark}\label{rmk:fullgdtrunc}
     In the case where $D$ is full, following Remark \ref{rmk:truncation}, we have that $\mathfrak{G}(D)$ is simply $(V \bitimes{s}{t} H)/\sim$, with the following relation:
\begin{equation}\label{eqn:gdfullrelation}
(\theta \cdot t_V \alpha, s_H \alpha \cdot \eta) \sim (\theta \cdot t_V \alpha', s_H \alpha' \cdot \eta)
\end{equation}
for $\alpha, \alpha' \in D$ such that $t_H \alpha = t_H \alpha'$ and $s_V \alpha = s_V \alpha'$. This relation guarantees that the multiplication given by \eqref{eqn:gdrelations} is well-defined.

In the full case, the isotropy groups of $\mathfrak{G}(D)$ coincide with the ``fundamental groups'' of \cite{ce-he-me:double}. Note that we have arrived at the relation \eqref{eqn:gdfullrelation} and the multiplication \eqref{eqn:gdrelations} by specializing a more general, natural construction.
\end{remark}

\subsection{Fundamental double groupoids and Haefliger fundamental groupoids}
\label{sec:fundamental}

Consider $\mathfrak{G}(\Pi_1(G))$, where $\Pi_1(G)$ is the
fundamental double groupoid of a Lie groupoid $G \arrows M$ whose
source map is a fibration (see Example \ref{example:fundamental}).
In this case, we recover a simple description of the Haefliger
fundamental groupoid \cite{haefliger:homotopy, haefliger:orbi,
mo-mr:fundamental} of $G$ as $(G \bitimes{s}{t} \Pi_1(M))/\sim$,
with the relation \eqref{eqn:gdfullrelation}. For any
$m\in M$, the isotropy group at $m$ of the Haefliger fundamental groupoid is isomorphic to the fundamental group of the classifying space $BG$ at $m$.

In the case where the source map of $G$ is not a fibration, then $\Pi_1(G) \arrows \Pi_1(M)$ is not a Lie groupoid, but it is in fact a local Lie ($1$)-groupoid in the sense of Definition \ref{dfn:localn} (the associated simplicial manifold is the fundamental groupoid of the nerve of $G$). Nonetheless, our construction essentially goes through verbatim. The local Lie groupoid structure of $\Pi_1(G) \arrows \Pi_1(M)$ is compatible with the groupoid structure of $\Pi_1(G) \arrows G$, so that we might call \eqref{eqn:fundamental} a \emph{semilocal double Lie groupoid}. In particular, the double-nerve $N_{\bullet,\bullet} \Pi_1(G)$ is still defined and is a bisimplicial manifold, and when we apply the bar construction to $N_{\bullet,\bullet} \Pi_1(G)$, we obtain a local Lie $2$-groupoid. Applying the functor $\mathfrak{G}$, we again obtain the Haefliger fundamental groupoid of $G$.

We point out that the ``equivalences'' and ``deformations'' in the definition of the Haefliger fundamental groupoid are both included in the equivalence relation \eqref{eqn:gdrelations}.

\section{Toward symplectic 2-groupoids}\label{sec:symplectic}

Recall from Example \ref{example:symplectic} that a symplectic double groupoid is a double Lie groupoid as in \eqref{eqn:dbliegpd}, where $D$ is equipped with a symplectic structure such that both
$D\rightrightarrows V$ and $D\rightrightarrows H$ are symplectic
groupoids.  In this section, we study the local Lie $2$-groupoid
$\barW ND$ associated to a symplectic double groupoid $D$. Particularly, we wish to investigate the geometric structure on $\barW ND$ that arises from the symplectic structure on $D$.

Although there is not yet an established definition of a ``symplectic $2$-groupoid'', it is expected \cite{severa:sometitle} that symplectic $2$-groupoids should be the objects that integrate Courant algebroids. We argue, based on the following diagram, that (local) $2$-groupoids of the form $\barW ND$, where $D$ is a symplectic double groupoid, are examples of symplectic $2$-groupoids integrating Courant algebroids of the form $A \oplus A^*$, where $(A, A^*)$ is a Lie bialgebroid \cite{lwx}.
\begin{equation*}
     \xymatrix{ \fbox{symplectic double groupoids} \ar_{\mbox{differentiation}}[d] \ar^-{\barW N}[r] & \fbox{symplectic $2$-groupoids?} \ar^{\mbox{differentiation}}[d] \\ \fbox{Lie bialgebroids} \ar^{\mbox{LWX}}[r] & \fbox{Courant algebroids}}
\end{equation*}

We observe that there is a natural map $\nu$ from $\barW_2 ND = V \bitimes{s}{t^2} D \bitimes{s^2}{t} H$ to $D$ given by $\nu(\theta,\alpha,\eta) = \alpha$. The symplectic
$2$-form $\omega$ on $D$ pulls back to define a closed $2$-form $\Omega \defequal \nu^* \omega$
on $\barW_2 ND$. In the remainder of this section, we consider some properties satisfied by $\Omega$ that might be considered ``higher'' analogues of the multiplicativity and nondegeneracy conditions satisfied by $2$-forms in the case of symplectic ($1$)-groupoids. This leads us to a tentative proposal for a definition of symplectic $2$-groupoid in \S\ref{sec:sympdef}.

\subsection{Multiplicativity}\label{subec:multiplicativity}
Let $X_\bullet$ be a simplicial manifold. We say that a differential form $\alpha$
on $X_q$ is \emph{multiplicative} if
\[
\sum_{i} (-1)^i(f_i^{q+1})^*\alpha=0,
\]
i.e.\ if $\alpha$ is closed with respect to the simplicial coboundary.

\begin{definition}\label{dfn:presymplectic}
     A \emph{presymplectic double groupoid} is a double Lie groupoid $D$ equipped with a closed $2$-form $\omega$ that is multiplicative with respect to both Lie groupoid structures.
\end{definition}

\begin{prop}\label{prop:mult-form}
If $D$ is a presymplectic double groupoid, then the closed $2$-form $\Omega = \nu^* \omega$ on $\barW_2 ND$ is multiplicative.
\end{prop}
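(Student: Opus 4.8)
The plan is to verify directly that the alternating sum of the pulled-back face maps on $\barW_3 ND$ vanishes, using the explicit formulas for the face maps and the two multiplicativity hypotheses on $\omega$. Concretely, I would show that $\sum_{i=0}^{3} (-1)^i (f_i^3)^* \Omega = 0$ as a $2$-form on $\barW_3 ND$. Since $\Omega = \nu^* \omega$ and $\nu(\theta,\alpha,\eta) = \alpha$, each term $(f_i^3)^*\Omega = (f_i^3)^* \nu^* \omega = (\nu \circ f_i^3)^* \omega$, so the whole computation reduces to understanding the four maps $\nu \circ f_i^3 : \barW_3 ND \to D$ and then assembling their pullbacks of $\omega$.

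First I would write out, from the picture-description of the face maps on $\barW_3 ND$ (the $3$-simplex $(\theta; \alpha_{11},\alpha_{12},\alpha_{22}; \eta)$ in \eqref{eqn:34simplex}), exactly which element of $D$ each composite $\nu \circ f_i^3$ produces. Deleting row/column $0$ (the map $f_0^3$) leaves the single square $\alpha_{22}$, so $\nu \circ f_0^3 = \alpha_{22}$; deleting row/column $3$ (the map $f_3^3$) leaves $\alpha_{11}$; the middle maps $f_1^3$ and $f_2^3$ involve a single horizontal or vertical multiplication of two of the $\alpha_{ij}$'s (together with boundary arrows that are discarded by $\nu$), yielding something like $\nu\circ f_1^3 = \alpha_{11} \cdot_V \alpha_{12}$-type and $\nu \circ f_2^3 = \alpha_{11} \cdot_H \alpha_{12}$-type expressions — I would pin down the precise indices from the face-map pictures. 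Then the alternating sum $\sum (-1)^i (\nu\circ f_i^3)^*\omega$ becomes a combination of $\omega$ pulled back along $\alpha_{22}$, a horizontal product, a vertical product, and $\alpha_{11}$.

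The key device is that multiplicativity of $\omega$ with respect to $\cdot_H$ means $\cdot_H^* \omega = \mathrm{pr}_1^* \omega + \mathrm{pr}_2^* \omega$ on $D \bitimes{s_H}{t_H} D$ (this is exactly the $q=1$ level of the simplicial-coboundary condition for the horizontal nerve), and similarly for $\cdot_V$. So I would substitute these identities to split the two ``product'' terms, and then check that everything cancels in pairs: the pieces coming from the vertical multiplicativity relation cancel against each other and against the $\alpha_{11}$, $\alpha_{22}$ terms, using that the relevant projections of the products agree with the corresponding single squares after pullback along the various fiber-product inclusions. This is the standard argument that a multiplicative form on the $1$-level of a simplicial manifold induces a multiplicative form on a ``totalization,'' adapted to the $\barW$ construction; an alternative, slicker phrasing is to observe that $\nu$ is essentially a simplicial-type map relating the simplicial coboundary on $\barW ND$ (in degree $2$) to the two simplicial coboundaries on $N_{\bullet,\bullet}D$ in which $\omega$ is a cocycle, so that $\nu^*$ intertwines the coboundaries and a cocycle pulls back to a cocycle.

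The main obstacle I expect is purely bookkeeping: correctly reading off the four maps $\nu \circ f_i^3$ from the face-map pictures (keeping straight which squares get multiplied horizontally versus vertically, and which arrows are boundary arrows killed by $\nu$), and then matching up the terms produced by the horizontal multiplicativity relation with those produced by the vertical one so that the signs work out. There is no deep difficulty — once the four composites are identified and the two bilinear relations $\cdot_H^*\omega = \mathrm{pr}_1^*\omega + \mathrm{pr}_2^*\omega$ and $\cdot_V^*\omega = \mathrm{pr}_1^*\omega + \mathrm{pr}_2^*\omega$ are invoked, the cancellation is forced — but it requires care with the combinatorics of the $3$-simplex. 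Closedness of $\Omega$ is immediate since $\Omega = \nu^*\omega$ and $\omega$ is closed, so no separate argument is needed there.
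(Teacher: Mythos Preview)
Your proposal is correct and follows essentially the same route as the paper's proof: compute the four composites $\nu\circ f_i^3$ (which the paper writes, in coordinates $(\theta,\alpha_1,\alpha_2,\alpha_3,\eta)$, as $\alpha_3$, $\alpha_2\cdot_V\alpha_3$, $\alpha_1\cdot_H\alpha_2$, $\alpha_1$), apply the two multiplicativity relations for $\omega$, and observe that the common ``middle'' pullback along the projection to $\alpha_2$ (called $\mu_2$ in the paper) cancels in the alternating sum. Your tentative guess $\nu\circ f_1^3 = \alpha_{11}\cdot_V\alpha_{12}$ is off---those two squares are horizontally adjacent, so the correct composite is $\alpha_{12}\cdot_V\alpha_{22}$---but as you say this is pure bookkeeping.
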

\begin{proof}
Recall that $\barW_3 ND$ is equal to $V \bitimes{s}{t^2} D \bitimes{s_H}{t_H} D \bitimes{s_V}{t_V} D \bitimes{s^2}{t} H$, which consists of $(\theta, \alpha_1, \alpha_2, \alpha_3, \eta) \in V \times D \times D \times D \times H$ such that
\begin{align*}
   s(\theta) &= t^2(\alpha_1), &  s_H(\alpha_1) &= t_H(\alpha_2), & s_V(\alpha_2) &=t_V(\alpha_3), &
s^2(\alpha_3)&=t(\eta).
\end{align*}
The face maps from $\barW_3 ND$ to $\barW_2 ND$ are as follows:
\begin{align*}
    f^3_0(\theta, \alpha_1, \alpha_2, \alpha_3, \eta) &= (s_V \alpha_1, \alpha_3, \eta), \\
   f^3_1(\theta, \alpha_1, \alpha_2, \alpha_3, \eta) &= (\theta \cdot t_V \alpha_1, \alpha_2 \cdot_V \alpha_3, \eta), \\
   f^3_2(\theta, \alpha_1, \alpha_2, \alpha_3, \eta) &= (\theta, \alpha_1 \cdot_H \alpha_2, s_H \alpha_3 \cdot \eta), \\
   f^3_3(\theta, \alpha_1, \alpha_2, \alpha_3, \eta) &= (\theta, \alpha_1, t_H \alpha_3).
\end{align*}
Composing these face maps with the projection $\nu$, we have
\begin{align*}
   \nu\circ f^3_0(\theta, \alpha_1, \alpha_2, \alpha_3, \eta) &= \alpha_3, \\
   \nu\circ f^3_1(\theta, \alpha_1, \alpha_2, \alpha_3, \eta) &= \alpha_2 \cdot_V \alpha_3, \\
   \nu\circ f^3_2(\theta, \alpha_1, \alpha_2, \alpha_3, \eta) &= \alpha_1 \cdot_H \alpha_2, \\
   \nu\circ f^3_3(\theta, \alpha_1, \alpha_2, \alpha_3, \eta) &= \alpha_1.
\end{align*}

Define $\mu_2: \barW_3 ND \to D$ by $\mu_2(\theta, \alpha_1, \alpha_2, \alpha_3, \eta)=\alpha_2$. As $\omega$ is a multiplicative
$2$-form on $D\rightrightarrows V$, we have that
\begin{equation}\label{eqn:omega-V}
(\nu\circ f^3_0)^*\omega+\mu_2^*\omega=(\nu\circ f^3_1)^*\omega.
\end{equation}
Similarly, as $\omega$ is a multiplicative $2$-form on
$D\rightrightarrows H$, we have that
\begin{equation}\label{eqn:omega-H}
(\nu\circ f^3_3)^*\omega+\mu_2^*\omega=(\nu \circ f^3_2)^*\omega.
\end{equation}
From equations (\ref{eqn:omega-V})--(\ref{eqn:omega-H}), we see that
the simplicial coboundary of $\Omega$ is
\begin{equation*}
   \sum_i (-1)^i(f^3_i)^*\Omega= \sum_i (-1)^i(\nu \circ f^3_i)^*\omega = 0.  \qedhere
\end{equation*}
\end{proof}


\subsection{Nondegeneracy}\label{subsec:nondegeneracy}
Let $D$ be a presymplectic double groupoid with presymplectic form $\omega$. Even in the case where $\omega$ is nondegenerate, it is clear that the closed $2$-form $\Omega = \nu^* \omega$ on $\barW_2 ND$ is degenerate, except in the extreme case where the side groupoids $H$ and $V$ are trivial. Our aim in this section is to write down conditions, expressed only in terms of $\Omega$ and the simplicial structure of $\barW ND$, that are equivalent to the nondegeneracy of $\omega$. The idea is that such conditions are the closest thing to nondegeneracy that we should expect to hold in the definition of symplectic $2$-groupoid. Our approach is inspired by Ping Xu's definition of quasi-symplectic groupoids \cite{xu:quasi-symp}.

First, we observe that, in the case where $\omega$ is nondegenerate, then the kernel of $\Omega$ at a point $(\theta, \alpha, \eta)$ is equal to the kernel of $\nu_*$, consisting of vectors $(X,0,Y)$, where $X \in T_\theta V$ and $Y \in T_\eta H$ such that $X \in \ker s_*$ and $Y \in \ker t_*$.

Next, recall that the face maps $f^2_i: \barW_2 ND \to \barW_1 ND$ are given by \eqref{eqn:face20}--\eqref{eqn:face22}. The kernel of the push-forward map $(f^2_0)_*$ at a point $(\theta, \alpha, \eta)$ consists of vectors $(X, \gamma, 0)$, where $X \in T_\theta V$ and $\gamma \in T_\alpha D$ such that $s_* X = t^2_* \gamma$ and $\gamma \in \ker(s_V)_*$.

The degeneracy maps $\barW_1 ND \to \barW_2 ND$ are given by
\begin{align}
\delta^1_0(\theta,\eta)&=(1,1(\theta),\eta),\label{eqn:degen01} \\
\delta^1_1(\theta,\eta)&=(\theta,1(\eta),1) \label{eqn:degen11}
\end{align}
for $(\theta,\eta) \in \barW_1 ND = V \bitimes{s}{t} H$. We define a subbundle $(\delta_1^1)^* \ker \Omega \subseteq T(\barW_1 ND)$, consisting of vectors $(X,Y)$ such that $(\delta_1^1)_* (X,Y) \in \ker \Omega$. One can immediately see from \eqref{eqn:degen11} and the above description of $\ker \Omega$ that $(\delta_1^1)_* (X,Y)$ is in $\ker \Omega$ if and only if $1_* Y = 0$. Since the unit map $1$ is an embedding, we conclude that $(\delta_1^1)^* \ker \Omega = \{ (X,0) \}.$

\begin{prop}\label{prop:nondegen1}
     If $\omega$ is nondegenerate, then $(f_2^2)_*$ maps the bundle $\left( \ker(f_0^2)_* \intersect \ker \Omega \right)$ fiberwise isomorphically onto $(\delta_1^1)^* \ker \Omega$.
\end{prop}
\begin{proof}
     From the above discussion, we have that
\[  \ker(f_0^2)_* \intersect \ker \Omega = \{ (X,0,0) \} \]
and
\[ (\delta_1^1)^* \ker \Omega = \{ (X,0) \}, \]
where in both cases $X \in \ker s_* \subseteq TV$. From \eqref{eqn:face22}, we have that
\[ (f_2^2)_* (X,0,0) = (X,0), \]
so the result is clear.
\end{proof}

Similarly, we consider the subbundle $(\delta_0^1)^* \ker \Omega \subseteq T(\barW_1 ND)$, consisting of vectors $(X,Y)$ such that $(\delta_0^1)_* (X,Y) \in \ker \Omega$. One can see that $(\delta_0^1)^* \ker \Omega = \{ (0,Y) \}.$
\begin{prop}\label{prop:nondegen2}
     If $\omega$ is nondegenerate, then $(f_0^2)_*$ maps the bundle $\left( \ker(f_2^2)_* \intersect \ker \Omega \right)$ fiberwise isomorphically onto $(\delta_0^1)^* \ker \Omega$.
\end{prop}

The statements of Propositions \ref{prop:nondegen1} and \ref{prop:nondegen2} are higher-dimensional analogues of the nondegeneracy properties in Xu's definition of quasi-symplectic groupoids. These propositions give necessary conditions for the nondegeneracy of $\omega$. However, the following example demonstrates that these conditions are not sufficient.

\begin{example}\label{example:vv}
 Let $V$ be a vector space. Consider the double Lie groupoid
\[
\doublegroupoid{V}{V\oplus V^*}{pt}{V^*}.
\]
The bar construction yields the Lie $2$-groupoid
\begin{equation}\label{eqn:barvv}
    \threenerve{pt}{V\oplus V^*}{V\oplus V\oplus V^*\oplus V^*}.
\end{equation}
Actually, \eqref{eqn:barvv} is a Lie $1$-groupoid, equal to the nerve of the abelian group $V \oplus V^*$.

The symplectic structure on $V \oplus V^*$ induces a presymplectic $2$-form $\Omega$ on $V\oplus V\oplus V^* \oplus V^*$ which is supported on the middle copy of $V\oplus V^*$. On the other hand, we can check easily that \eqref{eqn:barvv} equipped with the zero $2$-form also satisfies the properties in Propositions \ref{prop:nondegen1} and \ref{prop:nondegen2}.
\end{example}

Example \ref{example:vv} shows that extra properties are needed in order to guarantee the nondegeneracy of the form $\omega$ on $D$. Our suggestion is as follows.

For $m\in M$, consider $\delta_0^0(m) = (1(m),1(m)) \in \barW_1 ND = V \bitimes{s}{t} H$. If $\omega$ is nondegenerate, then the tangent space $T_{\delta_0^0(m)}\barW_1 ND = (T_{1(m)} V) \bitimes{s_*}{t_*} (T_{1(m)} H)$ has the following three natural subspaces:
\begin{align}
W^0_0=(\delta^0_0)_*T_m M &=\{ (1_* w, 1_* w)  \suchthat w \in T_m M \}, \label{eqn:w00} \\
W^1_0=(\delta^1_0)^* \ker\Omega|_{\delta_0^0(m)} &=\{(0,Y) \suchthat Y \in \ker t_*|_{1(m)} \}, \label{eqn:w01} \\
W^1_1=(\delta^1_1)^* \ker\Omega|_{\delta_0^0(m)} &=\{(X,0) \suchthat X \in \ker s_*|_{1(m)}  \label{eqn:w11} \}.
\end{align}
These subspaces are clearly complementary:
\begin{equation}\label{eq:subspace}
T_{\delta_0^0(m)}\barW_1 ND = W^0_0\oplus W^1_0\oplus W^1_1.
\end{equation}

As we will see in Theorem \ref{thm:symplectic-2-gpd}, the statements of Propositions \ref{prop:nondegen1} and \ref{prop:nondegen2}, together with \eqref{eq:subspace}, are sufficient conditions to ensure the nondegeneracy of the $2$-form $\omega$ on $D$. Furthermore, we can recover the nondegenerate pairing between the Lie algebroids of $V$ and $H$ as follows.

Let $A^V$ and $A^H$ denote the Lie algebroids of $V$ and $H$, respectively. It is clear from \eqref{eqn:w01} and \eqref{eqn:w11} that $W_0^1$ and $W_1^1$ can be respectively identified with $A^H_m$ and $A^V_m$ at any $m \in M$. The pairing is then given by
\begin{equation}
     \langle a, b \rangle = \Omega \left( (\delta_0^1)_* a, (\delta_1^1)_* b \right)
\end{equation}
for $a \in A^V_m = W^1_1$ and $b \in A^H_m = W_0^1$. The nondegeneracy of this pairing is a consequence of \cite[Theorem 2.9]{mac:symplectic}.

\subsection{Symplectic 2-groupoid}\label{sec:sympdef}
Based on the properties discovered in \S\ref{subec:multiplicativity}--\ref{subsec:nondegeneracy}, we will now propose a definition of symplectic $2$-groupoid.

Let $X_\bullet$ be a simplicial manifold with a $2$-form $\Omega$ on
$X_2$. As in \S\ref{subsec:nondegeneracy}, we define for $i=0,1$ a (possibly singular) subbundle $(\delta_i^1)^* \ker \Omega \subseteq T X_1$, consisting of vectors $w$ for which $(\delta_i^1)_* w$ is in $\ker \Omega$.

\begin{lemma}\label{lem:map-f02}\label{lem:non-deg} If $\Omega$ is multiplicative, then
\begin{enumerate}
     \item $(f_2^2)_*$ maps $\left( \ker(f_0^2)_* \intersect \ker \Omega \right)$ into $(\delta_1^1)^* \ker \Omega$, and
     \item $(f_0^2)_*$ maps $\left( \ker(f_2^2)_* \intersect \ker \Omega \right)$ into $(\delta_0^1)^* \ker \Omega$.
\end{enumerate}
\end{lemma}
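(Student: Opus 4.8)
The plan is to prove statement (1) and observe that (2) follows by a symmetric argument (reversing the roles of the vertical and horizontal structures, or equivalently using the ``last three faces'' symmetry already exploited in Lemma \ref{lemma:wndgeq3}). So fix a point $(\theta,\alpha,\eta) \in \barW_2 ND$ and a tangent vector $v \in \ker(f_0^2)_*$ at that point; I want to show $(f_2^2)_* v \in (\delta_1^1)^*\ker\Omega$, i.e.\ that $(\delta_1^1)_*(f_2^2)_* v \in \ker\Omega$ at the point $\delta_1^1 f_2^2 (\theta,\alpha,\eta)$.

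The key is to pick an auxiliary $3$-simplex witnessing the needed simplicial identity. Recall the degeneracy maps $\delta_0^2,\delta_1^2,\delta_2^2 : \barW_2 ND \to \barW_3 ND$ and use the simplicial identities \eqref{eqn:facedeg} relating $f_i^3$ to $\delta_j^2$. The idea is that for a suitable choice of degeneracy $\delta_j^2$ applied at $(\theta,\alpha,\eta)$, three of the four faces $f_i^3$ collapse onto data built from $f_0^2$ and degenerate simplices, while the remaining face recovers $\delta_1^1 f_2^2(\theta,\alpha,\eta)$. Concretely I expect that applying $f^3_\bullet$ to $\delta^2_2(\theta,\alpha,\eta)$ (or a similar degenerate lift) yields: one face equal to $f_0^2(\theta,\alpha,\eta)$-data pushed up by a degeneracy of $\barW_1 ND$, one face equal to $\delta_1^1 f_2^2(\theta,\alpha,\eta)$, and the remaining two faces a degenerate pair that cancel in the alternating sum. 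Pulling back $\Omega$ and using multiplicativity $\sum_i (-1)^i (f_i^3)^*\Omega = 0$ on $\barW_3 ND$ then gives, at the level of $2$-forms at $\delta^2_2(\theta,\alpha,\eta)$, an identity expressing $(\delta_1^1 f_2^2)^*\Omega$ as a $\pm$ combination of pullbacks along maps that factor through $f_0^2$ together with pullbacks along degeneracy maps. Since degenerate simplices carry no $2$-form information in the relevant directions (pullback of $\Omega$ along $\delta_i^1$ composed with a face collapses), evaluating on $v \in \ker(f_0^2)_*$ makes every term on the right vanish, forcing $\Omega\bigl((\delta_1^1)_*(f_2^2)_* v, -\bigr) = 0$.

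More directly, and probably cleaner to write: observe that $\delta_1^1 \circ f_2^2 : \barW_2 ND \to \barW_2 ND$ and that this composite, together with $f_0^2$ and an appropriate degeneracy map into $\barW_2 ND$ of a single $1$-simplex, arises as three of the faces (after a degeneracy) of a canonical map $\barW_2 ND \to \barW_3 ND$; then multiplicativity of $\Omega$ on $\barW_3 ND$, specialized along that map, becomes the $2$-form identity $(\delta_1^1 f_2^2)^*\Omega = (f_0^2)^* (\cdots) + (\text{degenerate terms})$ on $\barW_2 ND$. Applying both sides to $v\otimes w$ with $v \in \ker (f_0^2)_*$ and $w$ arbitrary, all terms on the right die --- the $(f_0^2)^*$ term because $(f_0^2)_* v = 0$, the degenerate terms because pullback of $\Omega$ along any map that factors through $\delta_i^1$ (a degeneracy of the $1$-truncation) annihilates everything, as $\Omega$ lives on the $2$-simplices. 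Hence $(\delta_1^1)_*(f_2^2)_* v \in \ker\Omega$, which is the claim.

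The main obstacle is purely bookkeeping: one must produce the precise degenerate $3$-simplex (equivalently, the correct composite $\barW_2 ND \to \barW_3 ND$) so that exactly the intended three faces appear with the right signs, and verify that the ``leftover'' faces are genuinely degenerate in the $1$-truncation sense (so that $\Omega$ pulls back to zero on them --- this uses that $\Omega$ is a $2$-form on $X_2$ with no constraint beyond multiplicativity, together with the fact that $\delta_i^1$ are embeddings). This is the smooth/simplicial analogue of the standard fact in the theory of quasi-symplectic groupoids that a multiplicative $2$-form restricts to a form whose kernel is controlled by the unit, and I would expect it to go through with the explicit face and degeneracy formulas \eqref{eqn:barface}--\eqref{eqn:bardegen} in hand; no analytic difficulty is anticipated. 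A reader comfortable with the explicit formulas \eqref{eqn:face20}--\eqref{eqn:face22} and \eqref{eqn:degen01}--\eqref{eqn:degen11} can also simply verify (1) by the direct chase described in the second paragraph, which may be the shortest route in the final writeup.
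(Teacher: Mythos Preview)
Your strategy is exactly the one the paper uses: push a point of $X_2$ into $X_3$ via a degeneracy, apply multiplicativity $\sum_i(-1)^i(f_i^3)^*\Omega=0$, and simplify using the identities \eqref{eqn:facedeg}. Two concrete corrections will turn your sketch into the paper's one-line proof.

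First, the correct degeneracy is $\delta_1^2$, not $\delta_2^2$. With $\delta_1^2$ one has
\[
f_0^3\delta_1^2=\delta_0^1 f_0^2,\qquad f_1^3\delta_1^2=\id,\qquad f_2^3\delta_1^2=\id,\qquad f_3^3\delta_1^2=\delta_1^1 f_2^2,
\]
so pulling back the multiplicativity relation along $\delta_1^2$ gives the single clean identity
\[
(\delta_1^1 f_2^2)^*\Omega \;=\; (\delta_0^1 f_0^2)^*\Omega
\]
on $X_2$, from which both (1) and (2) are read off immediately. With your guess $\delta_2^2$ the faces are $\delta_1^1 f_0^2$, $\delta_1^1 f_1^2$, $\id$, $\id$, which relates $f_0^2$ to $f_1^2$ rather than to $f_2^2$ and does not give what you need.

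Second, your stated reason for the ``extra'' terms vanishing is wrong: it is \emph{not} true that $(\delta_i^1)^*\Omega=0$ for a general multiplicative $\Omega$ on $X_2$ (multiplicativity is a condition one level up, on faces out of $X_3$, and says nothing directly about pullback along $\delta_i^1$). Fortunately you do not need this. The two leftover faces in the $\delta_1^2$ computation are \emph{identities}, appearing with opposite signs, so they cancel on the nose; nothing about degenerate pullbacks is invoked. Once you make these two fixes, your argument coincides with the paper's.
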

\begin{proof}
Let $v$ be a vector in $T X_2$ and $\tilde{w}$ be a vector in $TX_3$. Then, using \eqref{eqn:facedeg} and the multiplicative property of $\Omega$, we have
\begin{align*}
   \Omega( (\delta^1_1)_* (f^2_2)_*v, ( f^3_3 )_*\tilde{w}) &= \Omega \left( (f_3^3)_* (\delta_1^2)_* v , ( f^3_3 )_*\tilde{w}\right) \\
&= \left((f_3^3)^*\Omega\right) \left((\delta_1^2)_* v, \tilde{w}\right)\\
&= \left( (f_0^3)^*\Omega - (f_1^3)^*\Omega + (f_2^3)^*\Omega \right) \left((\delta_1^2)_* v, \tilde{w}\right) \\
&= \Omega \left( (f_0^3)_* (\delta_1^2)_* v, (f_0^3)_* \tilde{w} \right) - \Omega\left( (f_1^3)_* (\delta_1^2)_* v , (f_1^3)_*\tilde{w}\right)\\
&+ \Omega\left( (f_2^3)_* (\delta_1^2)_* v, (f_2^3)_*\tilde{w} \right) \\
&= \Omega \left( (\delta_0^1)_* (f_0^2)_* v , (f_0^3)_*\tilde{w}\right)+ \Omega\left( v, -(f_1^3)_*\tilde{w}+(f_2^3)_*\tilde{w} \right).
\end{align*}
Therefore, if $v\in \ker \Omega$, then we have
\[
 \Omega( (\delta^1_1)_* (f^2_2)_*v, ( f^3_3 )_*\tilde{w}) = \Omega \left( (\delta_0^1)_* (f_0^2)_* v , (f_0^3)_*\tilde{w}\right). 
\]
Now by the property that the map $(f_i^3)_*: TX_3\to TX_2$ $(i=0,...,3)$ is surjective,  if $v\in \ker(f_0^2)_*$, then $(f_2^2)_* v \in (\delta_1^1)^* \ker \Omega$, and if on the other hand $v \in \ker(f_2^2)_*$, then $(f_0^2)_* v \in (\delta_0^1)^* \ker \Omega$.

\end{proof}

\begin{definition}\label{dfn:symp-2-gpd}
A \emph{(local) presymplectic $2$-groupoid} is a (local) Lie $2$-groupoid $X_\bullet$ equipped
with a closed multiplicative $2$-form $\Omega$ on $X_2$. A \emph{(local) symplectic $2$-groupoid} is a (local) presymplectic $2$-groupoid such that
\begin{enumerate}
\item $(f_2^2)_*$ maps $\left( \ker(f_0^2)_* \intersect \ker \Omega \right)$ fiberwise isomorphically onto $(\delta_1^1)^* \ker \Omega$,
\item $(f_0^2)_*$ maps $\left( \ker(f_2^2)_* \intersect \ker \Omega \right)$ fiberwise isomorphically onto $(\delta_0^1)^* \ker \Omega$, and
\item for all $m \in X_0$, the subspaces $W_0^0 \defequal (\delta_0^0)_* T_m X_0$ and $W_i^1 \defequal (\delta^1_i)^* \ker\Omega |_{\delta_0^0(m)}$ are such that
\[T_{\delta_0^0(m)} X_1 = W_0^0 \oplus W_0^1 \oplus W_1^1.\]
\end{enumerate}
\end{definition}

The following theorem provides some support for Definition \ref{dfn:symp-2-gpd}:
\begin{thm}\label{thm:symplectic-2-gpd}Let $D$ be a presymplectic double groupoid (see Definition \ref{dfn:presymplectic}) with presymplectic form $\omega$ on $D$. Then the local Lie $2$-groupoid $\barW ND$, equipped with the $2$-form $\Omega \defequal \nu^* \omega$ on $\barW_2 ND$ is a local symplectic $2$-groupoid if and only if $\omega$ is nondegenerate.
\end{thm}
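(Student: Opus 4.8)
The plan is to prove both implications by directly relating the three structural conditions of Definition \ref{dfn:symp-2-gpd} to the nondegeneracy of $\omega$, using the explicit descriptions of $\ker\Omega$, the face maps, and the degeneracy maps on $\barW_\bullet ND$ that were worked out in \S\ref{subsec:nondegeneracy}. Recall that multiplicativity of $\Omega$ is automatic by Proposition \ref{prop:mult-form}, so $\barW ND$ with $\Omega = \nu^*\omega$ is always a local presymplectic $2$-groupoid; the content is entirely in conditions (1)--(3). The key computational input is that $\ker\Omega$ at $(\theta,\alpha,\eta)$ equals $\ker\nu_* = \{(X,0,Y) : X \in \ker s_*, \ Y \in \ker t_*\}$ when $\omega$ is nondegenerate, and that in general $\ker\Omega$ contains $\ker\nu_*$ together with $\nu_*^{-1}(\ker\omega|_\alpha)$.

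For the ``only if'' direction I would argue the contrapositive: suppose $\omega$ is degenerate at some point $\alpha \in D$, pick $0 \neq u \in \ker\omega|_\alpha$, and choose a point $(\theta,\alpha,\eta) \in \barW_2 ND$ projecting to $\alpha$ (possible since $\nu$ is a submersion). Then the vector $v = (0,u,0) \in T_{(\theta,\alpha,\eta)}\barW_2 ND$ (suitably corrected so that it is tangent to the fiber product — the double-source submersion condition lets us do this) lies in $\ker\Omega$. I would then show that $v$ can be arranged to lie in $\ker(f_0^2)_*$ but to have $(f_2^2)_* v \neq 0$, while $(f_2^2)_* v \notin (\delta_1^1)^*\ker\Omega$ — or symmetrically for (2) — thereby violating the fiberwise-isomorphism clause; alternatively, if the ranks happen to match, the failure shows up in the directness of the decomposition in (3) at a unit point. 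The cleanest route is probably to test condition (3): at a unit $(1(m),1(m))$, the subspace $(\delta_i^1)^*\ker\Omega$ genuinely depends on $\omega$ being nondegenerate along the identity bisection, and a degeneracy there forces $W_0^1 + W_1^1$ to be strictly larger than $\ker t_* \oplus \ker s_*$, destroying the direct sum with $W_0^0$ (whose dimension is fixed at $\dim M$). This uses the fact, cited after the propositions, that nondegeneracy of the induced pairing $\langle\,,\,\rangle$ between $A^V$ and $A^H$ is equivalent to nondegeneracy of $\omega$ at identities (\cite[Theorem 2.9]{mac:symplectic}), and conversely that degeneracy of $\omega$ somewhere propagates to a degeneracy along a unit via multiplicativity.

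For the ``if'' direction, assume $\omega$ nondegenerate. Then $\ker\Omega = \ker\nu_*$ exactly, and conditions (1) and (2) are precisely Propositions \ref{prop:nondegen1} and \ref{prop:nondegen2}, which are already proved. Condition (3) is exactly the decomposition \eqref{eq:subspace} established in \S\ref{subsec:nondegeneracy}, where the three summands $W_0^0, W_0^1, W_1^1$ were identified with $1_*T_mM$, $\ker t_*$, $\ker s_*$ inside $(T_{1(m)}V)\bitimes{s_*}{t_*}(T_{1(m)}H)$ and shown to be complementary — this complementarity is a purely Lie-groupoid-theoretic fact about the side groupoids and does not even need $\omega$, except that the identification $W_i^1 = (\delta_i^1)^*\ker\Omega$ requires $\ker\Omega = \ker\nu_*$, which is where nondegeneracy is used. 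So this direction is essentially a matter of assembling results already in hand.

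The main obstacle is the ``only if'' direction, specifically showing that a degeneracy of $\omega$ \emph{anywhere} on $D$ — not just along the unit bisection — obstructs one of the three conditions. A pointwise kernel vector of $\omega$ at a general $\alpha$ pulls back to a kernel vector of $\Omega$ at $(\theta,\alpha,\eta)$, but one must check that it actually breaks the fiberwise-isomorphism statement in (1) or (2) rather than being absorbed harmlessly; this requires carefully tracking how $(f_0^2)_*$ and $(f_2^2)_*$ act on $\nu_*^{-1}(\ker\omega)$, using the formulas \eqref{eqn:face20}--\eqref{eqn:face22} together with multiplicativity of $\omega$ on both side structures to translate a degeneracy at $\alpha$ into one at $s_V\alpha$, $t_H\alpha$, and ultimately at a unit. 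Making this reduction precise — essentially a ``spreading'' argument for the kernel of a multiplicative form, analogous to the classical fact that a multiplicative $2$-form on a symplectic groupoid is determined by and degenerates with its restriction to the identities — is the technical heart of the proof.
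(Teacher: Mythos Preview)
Your ``if'' direction is correct and matches the paper exactly: it assembles Propositions \ref{prop:nondegen1}, \ref{prop:nondegen2}, and the decomposition \eqref{eq:subspace}.

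For the ``only if'' direction there is a genuine gap. Your plan is essentially: condition (3) detects degeneracy of $\omega$ along the double-identity section, and a multiplicative ``spreading'' argument reduces a general degeneracy to one at an identity. But condition (3) does \emph{not} detect all degeneracy at $1^2(m)$. Unwinding the definitions in the presymplectic case, one finds
\[
W_1^1 = \{(X,Y) : s_*X = t_*Y,\ 1_*Y \in \ker\omega\},\qquad
W_0^1 = \{(X,Y) : s_*X = t_*Y,\ 1_*X \in \ker\omega\},
\]
so the failure of $W_0^0 \oplus W_0^1 \oplus W_1^1 = T_{\delta_0^0(m)}\barW_1 ND$ is equivalent only to $\ker\omega_{1^2(m)}$ meeting $1_*(T_{1(m)}V)$ or $1_*(T_{1(m)}H)$ nontrivially. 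A kernel vector transverse to both unit sections would pass undetected by (3), and your appeal to \cite[Theorem 2.9]{mac:symplectic} does not close this: that result establishes nondegeneracy of the pairing \emph{given} nondegeneracy of $\omega$, not the converse you need.

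The paper closes the gap differently. It uses condition (3) to obtain $\ker\omega \cap 1_*(TV) = 0 = \ker\omega \cap 1_*(TH)$ at $1^2(m)$, and then feeds this into a dimension formula for multiplicative forms (\cite[Lemma 3.3]{bcwz}) to get two equations relating $\dim\ker\omega_{1^2(m)}$, $\dim D$, $\dim V$, $\dim H$. It then invokes condition (1) at points of the form $(1(m),1(\theta),\eta)$ to obtain $\ker(s_V)_* \cap \ker\omega = 0$ along the unit section $1(V) \subset D$, and applies the same BCWZ lemma once more. Comparing the resulting equations forces $\dim D = 2\dim V$ and hence $\ker\omega_{1(\theta)} = 0$ for all $\theta \in V$. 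Only at this final stage does a multiplicativity/spreading argument (\cite[Lemma 4.2]{bcwz}) enter, propagating nondegeneracy from the units $1(V)$ to all of $D$. So both conditions (1) and (3), together with the external dimension lemma, are genuinely needed; your proposal uses (3) alone and lacks a substitute for the dimension count.
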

\begin{proof}
The ``if'' part of the statement is shown in \S\ref{subsec:nondegeneracy}. For the ``only if'' part, we suppose that $\barW ND$ is a local symplectic $2$-groupoid, and we will prove that $\omega$ is nondegenerate.

For any $m \in M$, we have by definition that $W_1^1 \defequal (\delta^1_1)^* \ker\Omega |_{\delta_0^0(m)}$ consists of vectors $(X,Y)$, where $X \in T_{1(m)} V$ and $Y \in T_{1(m)} H$ are such that $s_* X = t_* Y$ and $1_* Y \in \ker \omega$. In particular, $W_1^1$ includes all vectors of the form $(X,0)$, where $s_* X = 0$.

Similarly, $W_0^1 \defequal (\delta^1_0)^* \ker\Omega |_{\delta_0^0(m)}$ consists of vectors $(X,Y)$ such that $s_* X = t_* Y$ and $1_* X \in \ker \omega$. In particular, $W_0^1$ includes all vectors of the form $(0,Y)$, where $t_* Y = 0$.

The description of $W_0^0$ is the same as in \eqref{eqn:w00}. Condition (3) in Definition \ref{dfn:symp-2-gpd} implies that $W_1^1$ only consists of vectors of the form $(X,0)$, and that $W_0^1$ only consists of vectors of the form $(0,Y)$. It follows that $\ker \omega \intersect 1_*(T_{1(m)}H) = 0$ and $\ker \omega \intersect 1_*(T_{1(m)}V) = 0$.

By \cite[Lemma 3.3]{bcwz}, we then have that
\begin{equation}
   0 = \frac{1}{2}(\dim(\ker \omega_{1^2(m)})) + 2 \dim H - \dim D
\end{equation}
and
\begin{equation}\label{eqn:symppf1}
 0 = \frac{1}{2}(\dim(\ker \omega_{1^2(m)})) + 2 \dim V - \dim D.
\end{equation}
Here, we use $1^2$ to denote the iterated unit map from $M$ to $D$.

Now consider a point of the form $(1(m), 1(\theta), \eta) \in \barW_2 ND$ where $\theta \in V$ is such that $t(\theta) = m$. By definition, $(\ker (f_0^2)_* \intersect \ker \Omega)|_{(1(m), 1(\theta), \eta)}$ consists of vectors $(X, \gamma, 0)$, where $X \in T_{1(m)} V$ and $\gamma \in T_{1(\theta)} D$ are such that $s_* X = t^2_* \gamma$ and $\gamma \in \ker(s_V)_* \intersect \ker \omega$. On the other hand, condition (1) in Definition \ref{dfn:symp-2-gpd} implies that $(\ker (f_0^2)_* \intersect \ker \Omega)|_{(1(m), 1(\theta), \eta)}$ consists only of vectors of the form $(X, 0, 0)$. It follows that $(\ker(s_V)_* \intersect \ker \omega)|_{1(\theta)} = 0$. Again using \cite[Lemma 3.3]{bcwz}, we have that
\begin{equation}\label{eqn:symppf2}
 0 = \frac{1}{2}(\dim(\ker \omega_{1(\theta)})) - 2 \dim V + \dim D.
\end{equation}
Similarly, we can see that for any $\eta \in H$,
\begin{equation}
0 = \frac{1}{2}(\dim(\ker \omega_{1(\eta)})) - 2 \dim H + \dim D.
\end{equation}
Comparing \eqref{eqn:symppf1} and \eqref{eqn:symppf2} with $\theta = 1(m)$, we have that $\dim D = 2\dim V$, so we conclude from \eqref{eqn:symppf2} that $\omega$ is nondegenerate at all unit elements $1(\theta)$. It then follows from the fact that $\omega$ is multiplicative (see, e.g.\ \cite[Lemma 4.2]{bcwz}) that $\omega$ is nondegenerate everywhere.
\end{proof}

As mentioned in the introduction, the construction of symplectic
$2$-groupoids from symplectic double groupoids allows us to
integrate the standard Courant algebroid $TM \oplus T^*M$ to the
symplectic $2$-groupoid \eqref{eqn:standard2}. However, the Courant
algebroid structures on $TM \oplus T^*M$ that are twisted by closed
$3$-forms cannot be integrated via symplectic double groupoids, and
it is not clear how (or even if) such Courant algebroids can be seen
to integrate\footnote{Sheng and Zhu communicated \cite{sh-zh} to us
that they have some ideas to integrate $TM \oplus T^*M$ twisted by
nontrivial closed 3-forms to Lie 2-groupoids using representations
up to homotopy as in \cite{sh-zhu:semi} and
\cite{sh-zh:integration}.} to a symplectic $2$-groupoid in the sense
of Definition \ref{dfn:symp-2-gpd}.

The definitions suggested to us by Li-Bland and \v{Severa}
\cite{lbs} include a $3$-form $\Omega_1$ on $X_1$ in addition to a
$2$-form $\Omega_2$ on $X_2$, such that $\Omega_1 + \Omega_2$ is a
closed element of the total complex of simplicial forms. It seems
quite likely that a generalization of Definition
\ref{dfn:symp-2-gpd} along these lines will be necessary to
incorporate all Courant algebroids.

\bibliography{dbl-bib}
\bibliographystyle{amsalpha}
\end{document}